\address{Faculty of mathematics \\ NRU HSE \\ Moscow, Usacheva str.,6,  119048}
  \email{vasirog[at]gmail.com}
\begin{document}
\newtheorem{predl}{Proposition}[section]
\newtheorem{thrm}[predl]{Theorem}
\newtheorem{lemma}[predl]{Lemma}
\newtheorem{cor}[predl]{Corollary}
\theoremstyle{definition}
\newtheorem{df}{Definition}
\renewcommand{\proofname}{\textnormal{\textbf{Proof:  }}}
\newcommand{\rmk}{\textnormal{\textbf{Remark:  }}}
\renewcommand{\C}{\mathbb C}
\renewcommand{\refname}{Bibliography}
\newcommand{\Z}{\mathbb Z}
\newcommand{\Q}{\mathbb Q}
\renewcommand{\o}{\otimes}
\newcommand{\e}{\mathcal{P}}
\newcommand{\R}{\mathbb R}
\renewcommand{\O}{\mathcal O}
\newcommand{\g}{\mathfrak g}
\newcommand{\D}{\mathcal D}
\renewcommand{\i}{\sqrt{-1}}
\renewcommand{\Im}{\operatorname{Im}}
\newcommand{\Ker}{\operatorname{Ker}}
\newcommand{\Hom}{\operatorname{Hom}}
\newcommand{\Alb}{\operatorname{Alb}}
\newcommand{\Hilb}{\operatorname{Hilb}}
\newcommand{\CP}{\mathbb{C}\mathbf{P}}
\newcommand{\acts}{\lefttorightarrow}
\newcommand{\codim}{\operatorname{codim}}
\newcommand{\hdot}{{\:\raisebox{3pt}{\text{\circle*{1.5}}}}}
\newcommand{\Tw}{\operatorname{Tw}}
\newcommand{\tw}{\mathrm{tw}}
\binoppenalty = 10000
\relpenalty = 10000

\title{Non-algebraic deformations of flat K\"ahler manifolds}

\author{Vasily Rogov} \thanks{Affiliation: National Research University --- Higher School of Economics / Humboldt Univerist\"at zu Berlin ; Email: vasirog@gmail.com. The study has been  partially funded within the framework of the HSE University Basic Research Program and the Russian Academic Excellence Project '5-100.}

\begin{abstract}
Let $X$ be a compact K\"ahler manifold with vanishing Riemann curvature. We prove that there exists a manifold $X'$ deformation equivalent to $X$ which is not an analytification of any projective variety, if and only if $H^0(X, \Omega^2_X) \neq 0$. Using this, we recover a recent theorem of Catanese and Demleitner, which states that a rigid smooth quotient of a complex torus is always projective.

We also produce many examples of non-algebraic flat K\"ahler manifolds with vanishing first Betti number.
\end{abstract}

\maketitle

\tableofcontents

\section{Introduction and preliminaries}

Any smooth projective variety over $\C$ can be viewed as a compact K\"ahler manifold. 

 However, the world of K\"ahler geometry is  noticeably  larger than the world of complex projective geometry. There exist holomorphic deformations of  projective varieties  which cannot be holomorphically embedded into any complex projective space. Moreover, Claire Voisin has constructed  examples of compact K\"ahler manifolds, which are not homeomorphic to any projective variety (\cite{Vois04}).

Let us say that a complex structure is {\it algebraic}, or, more generally, that a complex manifold is {\it algebraic}, if it can be obtained as analytification of a smooth projective variety. The list of manifolds that admit both algebraic and non-algebraic complex structures includes K3-surfaces, Hyperk\"ahler manifolds, complex tori and others. For complex tori, it is well-known that in every dimension $n>1$ a very general $n$-dimensional complex torus is non-algebraic (\cite{BL99}, Ch. 1, Corollary 6.3. See also \cite{Sh}, Ch. VII, Subsection 1.4.). As it follows from Barlet's theorems about  the semi-continuity property for the algebraic dimension (\cite{Bar}), if there exists at least one non-algebraic complex structure in a given deformation class, then a very general member of this family is non-algebraic.

The aim of this article is to study  the existence of non-algebraic deformations of  complex  manifolds admitting flat K\"ahler metric. Such manifolds are natural generalisations of complex tori, and, as follows from Bieberbach's theory of crystallographic groups (see  subsection~\ref{Fedorov}), every flat K\"ahler manifold $X$ is isomorphic to a quotient $T/G$ for a complex torus $T$ and a finite group $G$ acting on $T$ holomorphically and freely.

Observe that in this case $H^{\hdot}(X, \Q) = H^{\hdot}(T, \Q)^G$ (see e.g. \cite{Hat}, Prop. 3G.1 on p. 321) and the isomorphism is compatible with Hodge decomposition (\cite{Vois02}, Section 7.3.2).

 In general, there might be no non-trivial holomorphic $2$-forms on $X$. This gives an obvious obstruction for $X$ to admit a non-algebraic deformation. If $H^0(X, \Omega^2_X) = H^{2,0}(X)=0$, the same holds for any deformation $X'$ of $X$, since Hodge numbers are constant in flat smooth families.  The cone of K\"ahler classes is open inside $$H^{2}(X', \R)  = H^2(X', \R) \cap H^{1,1}(X')$$ and we are able to find a rational K\"ahler class.  By Kodaira's embedding theorem this realises $X'$ as a subvariety in $\CP^N$.

In  Theorem \ref{deformations} we show that this is indeed the only obstruction for a flat K\"ahler manifold to admit a non-algebraic deformation: if $H^0(X, \Omega^2) \neq 0$, there exists a family of flat K\"ahler manifolds over a disc (in fact, over a projective line) with special fibre being isomorphic to $X$ and very general fibre being non-algebraic (see  Section \ref{main section} for the precise statement).

We would also like to mention another motivation for our work, which  comes from   the Amerik-Rovinsky-Van de Ven conjecture. In their article (\cite{ARVdV}, Section 3) these authors conjectured that  the second Betti number of any compact  flat K\"ahler manifold is at least two. Observe that any compact K\"ahler manifold has $b_2 > 0$ and if $b_2 = 1$,  this manifold automatically admits a K\"ahler form with integral cohomology class and therefore is algebraic. There is a recent preprint of R. Lutowski  \cite{L}, claiming to prove a statement equivalent to ARVdV Conjecture. The work of Lutowski  is sufficiently based on an  earlier paper \cite{HS}, which uses the classification of simple finite groups. In spite of this, we are still hoping for some advances in understanding this problem from a more geometric point of view.

The paper is organised in the following way: first, we explain the main properties of compact flat K\"ahler manifolds and give some examples of those. In Section \ref{main section} we give the precise formulation for our main theorem (Theorem~\ref{deformations}). Then we recall some heuristics coming from Hyperk\"ahler geometry, prove a pair of lemmas from linear algebra, and deduce the main theorem. As a corollary, we obtain a new proof of the Catanese-Demleitner theorem (\cite{CD+C}, Theorem 1).

Finally, in Section \ref{quaternionic doubles} we describe examples of non-algebraic flat K\"ahler manifolds and provide a construction for flat K\"ahler manifolds which admit holomorphic symplectic form and have vanishing first Betti number. Together with the main theorem, this gives many examples of non-algebraic flat K\"ahler manifolds with $b_1=0$. This is of independent interest since explicit examples of such manifolds do not seem to be present in the literature.

\section*{Acknowledgements} I am thankful to  Misha Verbitsky for turning my attention to this subject, for his support and  for fruitful conversations during the preparation of this paper. I am  thankful to  Ekaterina Amerik, Rodion D\'eev and Roman Krutowski  for several useful discussions and to the anonymous reviewer from Mathematical Research Letters journal for important comments concerning both the form and the content of this work. I am also thankful to Alejandro Tolcachier for a remark regarding the preliminary version of this text.

\subsection{Flat K\"ahler manifolds}\label{Fedorov}

Recall that {\it a crystallographic group} is a discrete subgroup $\Gamma \subset \operatorname{Iso}(\R^n)$ of isometries of  Euclidean space, such that  $\R^n/\Gamma$ is compact. The study of such groups was initiated by Evgraf Fedorov \cite{Fed}. The modern theory of crystallographic groups is based on the following theorem of Ludwig Bieberbach(\cite{Bieb}):

\begin{thrm}[Bieberbach]\label{Bieberbach}
A finitely generated group $\Gamma$ can be realised as a crystallographic group in $\operatorname{Iso}(\R^n)$ if and only if $\Gamma$ contains a normal free abelian subgroup of finite index. This subgroup is of rank $n$ and  acts on $\R^n$ by translations. For a given $n$ there exists only a finite number of such groups $\Gamma \subset \operatorname{Iso}(\R^n)$ up to conjugation.
\end{thrm}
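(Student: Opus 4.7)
My plan is to split the biconditional into an easy direction and a hard direction, and then address the finiteness separately.

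For the easy direction, I start with an abstract group $\Gamma$ containing a normal free abelian subgroup $T \cong \Z^n$ of finite index. Setting $G = \Gamma/T$, I note that conjugation makes $G$ act on $T \otimes_{\Z} \R \cong \R^n$. Averaging an arbitrary inner product over the finite group $G$ produces a $G$-invariant inner product, so the action realises $G$ as a subgroup of $\operatorname{O}(n)$. The extension $1 \to \Z^n \to \Gamma \to G \to 1$ is classified by an element of $H^2(G,\Z^n)$; since $G$ is finite the image of this class in $H^2(G,\R^n)$ vanishes, which is precisely what is needed to realise $\Gamma$ as a subgroup of $\operatorname{O}(n) \ltimes \R^n = \operatorname{Iso}(\R^n)$. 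Discreteness and cocompactness of the action follow from the corresponding properties of $T$ acting by translations on $\R^n$.

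For the hard direction, I would use the classical commutator estimate: if $A, B \in \operatorname{O}(n)$ are sufficiently close to $\mathrm{id}$, then $\|[A,B] - \mathrm{id}\|$ is quadratically smaller than $\max(\|A-\mathrm{id}\|, \|B-\mathrm{id}\|)$. Let $\pi\colon \operatorname{Iso}(\R^n) \to \operatorname{O}(n)$ be the projection onto rotational parts and $T := \Gamma \cap \ker \pi$ the subgroup of pure translations. I take a small neighbourhood $U$ of $\mathrm{id}$ in $\operatorname{O}(n)$ and study $\Gamma_U := \{\gamma \in \Gamma : \pi(\gamma) \in U\}$. Using cocompactness, an element of $\Gamma$ with arbitrary rotational part can be multiplied by a translation to land in a fixed compact set, and iterating commutators inside $\Gamma_U$ must stabilise by discreteness; the quadratic decay of the commutator estimate then forces $\pi(\Gamma_U)$ to be abelian. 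A Zassenhaus-type argument upgrades this: one shows every element of $\Gamma_U$ is in fact a translation, so $\Gamma_U \subseteq T$. Since $\operatorname{O}(n)$ is compact and $\pi(\Gamma)$ is covered by finitely many translates of $U$, it follows that $[\Gamma : T]$ is finite. Finally, cocompactness implies $T$ spans $\R^n$, so $T \cong \Z^n$ acts by translations of full rank.

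For the last assertion, finiteness up to conjugation, my plan is to observe that the point group $G := \pi(\Gamma) \subset \operatorname{O}(n)$ is a finite subgroup, and by Jordan's theorem on finite subgroups of $\operatorname{GL}_n$ the possibilities for the abstract group $G$ are bounded in terms of $n$ alone. The lattice $T$ inside $\R^n$ is determined up to $\operatorname{GL}_n(\Z)$ by the $G$-module structure, and the extension class of $\Gamma$ lives in the finite group $H^2(G, T)$; conjugation in $\operatorname{Iso}(\R^n)$ absorbs the action of $\operatorname{Aut}(T) \rtimes \operatorname{Aut}(G)$ on these data. Combining these finite-to-one choices gives finitely many conjugacy classes.

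The main obstacle I anticipate is the hard direction, specifically turning the differential-geometric commutator estimate into a clean algebraic conclusion that $\Gamma_U \subseteq T$. The subtlety is that commutators of isometries with small rotational parts but arbitrary translational parts need not themselves be small as isometries, so the argument has to exploit discreteness of $\Gamma$ together with the compact domain provided by cocompactness, and it is here that the Zassenhaus lemma, or an analogue of Margulis' lemma specialised to $\operatorname{Iso}(\R^n)$, enters in an essential way.
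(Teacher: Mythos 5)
The paper offers no proof of this statement: it is quoted as a classical theorem with a citation to \cite{Bieb}, and the neighbouring Lemma~\ref{geometric Bieberbach} is likewise referred to \cite{Szcz}, so your proposal can only be judged on its own terms. The route you choose is the standard one (group cohomology for the realisation, a commutator/Zassenhaus estimate for the first Bieberbach theorem, integral representation theory for the finiteness), but there are concrete gaps. In the ``easy'' direction, your construction only produces a homomorphism $\Gamma \to \R^n \rtimes \operatorname{O}(n)$, and it is injective exactly when the conjugation action of $G = \Gamma/T$ on $T$ is faithful. This is not implied by the hypotheses as you (and, to be fair, the theorem as stated) use them: $\Z^n \times \Z/2\Z$ contains a normal free abelian subgroup of rank $n$ and finite index, yet it is not a crystallographic group in any $\operatorname{Iso}(\R^m)$ --- a central involution commuting with a full lattice of translations would have trivial rotational part, hence be a nontrivial translation of finite order. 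The classical fix (due to Zassenhaus) is to require the abelian subgroup to be maximal abelian, equivalently self-centralising; your argument needs this hypothesis, or a reduction to it, and you should say where faithfulness of $G \to \operatorname{O}(n)$ comes from.

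In the ``hard'' direction the decisive step --- passing from ``$\pi(\Gamma_U)$ is abelian'' to ``every element of $\Gamma_U$ is a pure translation'' and hence to finiteness of $[\Gamma : T]$ --- is precisely the content of the first Bieberbach theorem, and you defer it to ``a Zassenhaus-type argument'' without supplying it; as you yourself note, the quadratic commutator estimate controls only rotational parts, so this is the whole difficulty rather than a routine upgrade. Finally, for the finiteness up to conjugation, Jordan's theorem on finite subgroups of $\operatorname{GL}_n(\C)$ is not the input you need: what the argument actually uses is that $\operatorname{GL}_n(\Z)$ has only finitely many conjugacy classes of finite subgroups (Minkowski, Jordan--Zassenhaus) and that for a fixed finite $G$ there are finitely many isomorphism classes of $\Z[G]$-lattices of rank $n$; both facts are nontrivial and are hidden in your phrase ``determined up to $\operatorname{GL}_n(\Z)$ by the $G$-module structure''. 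None of this makes the strategy wrong --- it is the textbook proof --- but in its present form the two hardest points of the theorem are asserted rather than proved.
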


\rmk The finiteness result in fact gives a partial solution for Hilbert's 18th problem, which originally motivated Bieberbach in his studies. The finite group $G = \Gamma/\Z^n$ is often called {\it the rotation group} of $\Gamma$.

\begin{lemma}\label{geometric Bieberbach}
Let $M$ be a compact  manifold of dimension $n$. The following conditions are equivalent:
\begin{itemize}
\item[(1)] $M$ admits a Riemannian metric $g$, such that the Riemann curvature tensor $R_g$  vanishes  everywhere;
\item[(2)] $M = \R^n/\Gamma$  for some torsion-free crystallographic group $\Gamma \subset \operatorname{Iso}(\R^n)$;
\item[(3)] $M = T/G$, where $T = (S^1)^{\times n}$ is a  torus and $G$ is a finite group which acts on $T$ freely.
\end{itemize}
\end{lemma}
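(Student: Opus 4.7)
The plan is to prove the three conditions equivalent cyclically, going $(1) \Rightarrow (2) \Rightarrow (3) \Rightarrow (1)$, using the Killing--Hopf classification of flat space forms together with Bieberbach's Theorem~\ref{Bieberbach}.

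For $(1) \Rightarrow (2)$, I would appeal to the classification of complete simply connected flat Riemannian manifolds. Since $M$ is compact with vanishing curvature, it is geodesically complete, so its universal Riemannian cover $\widetilde M$ is simply connected, complete, and flat, hence isometric to Euclidean $\R^n$. The deck transformation group $\Gamma = \pi_1(M)$ then embeds into $\operatorname{Iso}(\R^n)$, acts freely and properly discontinuously, and has compact quotient $M$; freeness forces torsion-freeness, so $\Gamma$ is a torsion-free crystallographic group realizing $M = \R^n/\Gamma$.

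For $(2) \Rightarrow (3)$, Bieberbach's theorem supplies a normal finite-index free abelian subgroup $\Lambda \subset \Gamma$ of rank $n$, which acts on $\R^n$ by translations. The tower $\R^n \to \R^n/\Lambda \to \R^n/\Gamma$ then exhibits $M = T/G$ for $T := \R^n/\Lambda \cong (S^1)^{\times n}$ and $G := \Gamma/\Lambda$ finite. To see that $G$ acts freely, suppose some $\gamma\Lambda \in G$ fixes a point $x+\Lambda \in T$; then $\gamma(x) = x + \lambda$ for some $\lambda \in \Lambda$, so $\lambda^{-1}\gamma$ is an element of $\Gamma$ with a fixed point in $\R^n$. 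Any nontrivial Euclidean isometry with a fixed point is conjugate to a nontrivial orthogonal transformation, which has finite order; since $\Gamma$ is torsion-free, $\lambda^{-1}\gamma = e$ and $\gamma \in \Lambda$, i.e.\ $\gamma\Lambda$ is trivial in $G$.

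For $(3) \Rightarrow (1)$, the universal cover of $M = T/G$ is $\R^n$, and $\pi_1(M)$ sits in an extension $1 \to \Z^n \to \pi_1(M) \to G \to 1$; in particular, $\pi_1(M)$ contains a normal free abelian subgroup of finite index. I would then apply the converse part of Bieberbach's theorem to realize $\pi_1(M)$ as a crystallographic subgroup $\Gamma \subset \operatorname{Iso}(\R^n)$, and invoke Bieberbach's rigidity (any two crystallographic realizations of the same abstract group are conjugate by an affine map) to conclude that $\R^n/\Gamma$ is diffeomorphic to $M$; the flat metric on $\R^n$ then descends to a flat Riemannian metric on $M$. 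Equivalently, one may average an inner product on $\R^n$ over the finite rotation part of $\Gamma$ to obtain a $G$-invariant flat metric on $T$, which descends to $T/G = M$.

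The main obstacle is the last implication $(3) \Rightarrow (1)$: the datum in $(3)$ is purely topological (a finite free group action on a torus), whereas $(1)$ demands a geometric structure. The content is precisely that such a topological action must be affine, i.e.\ can be arranged to preserve a flat metric; this is exactly what Bieberbach's theorem, applied to the virtually abelian group $\pi_1(M)$, is designed to provide. The other two implications are essentially routine once one has Killing--Hopf and Bieberbach at hand.
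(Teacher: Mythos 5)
The paper does not actually prove this lemma --- it simply cites Szczepa\'nski's book --- so the closest internal comparison is the paper's proof of the complex analogue, Lemma~\ref{complex geometric Bieberbach}, whose first two implications follow essentially your route (flat Levi--Civita connection trivialises the universal cover, then Bieberbach's theorem produces the torus cover) and whose third implication is an averaging argument. Your $(1)\Rightarrow(2)$ and the skeleton of $(2)\Rightarrow(3)$ are fine, but in $(2)\Rightarrow(3)$ the justification ``a nontrivial orthogonal transformation has finite order'' is false (an irrational rotation has infinite order). The correct reason $\lambda^{-1}\gamma$ is trivial is that its image in the finite group $G=\Gamma/\Lambda$ has some finite order $k$, so $(\lambda^{-1}\gamma)^k$ is a translation fixing $x$, hence the identity; thus $\lambda^{-1}\gamma$ is a torsion element of the torsion-free group $\Gamma$. (Equivalently: point stabilizers of a properly discontinuous isometric action are finite.) This is a one-line repair, but as written the step is wrong.

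The genuine gap is in $(3)\Rightarrow(1)$. Bieberbach's rigidity theorem compares two crystallographic subgroups of $\operatorname{Iso}(\R^n)$, i.e.\ two manifolds that are already flat; it does not let you conclude that a smooth manifold $M$ with $\pi_1(M)\cong\Gamma$ is diffeomorphic to $\R^n/\Gamma$. That conclusion is a smooth rigidity statement for closed aspherical manifolds with virtually abelian fundamental group, which is far beyond Bieberbach (the topological version is a hard theorem of Farrell--Hsiang, and the smooth version fails in general because of exotic structures). So your primary route does not close. Your alternative --- average the flat metric on $T$ over $G$ and descend it --- is the right argument and is exactly what the paper does for Lemma~\ref{complex geometric Bieberbach}, but it needs the $G$-action on $T$ to be by affine transformations so that the averaged metric has constant coefficients. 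In the holomorphic setting this is automatic (every biholomorphism of a compact complex torus is affine), which is why the paper's averaging works there; in the purely smooth setting of condition $(3)$ you must either take affineness as part of the hypothesis (as the source effectively does) or prove that the given free action can be smoothly conjugated to an affine one, which you have not done. You correctly identify this as ``the main obstacle,'' but neither of your two proposed resolutions actually overcomes it.
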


The proof can be founded e.g.  in \cite{W} (Theorem 3.3.1).

In this article, we are  using  the  complex version of the same theory.  Let $\operatorname{Iso}_{\C}(\C^n)$ be the group of biholomorphisms of $\C^n$ preserving the standard Hermitian metric. A {\it complex crystallographic group } is a discrete subgroup $\Gamma \subset \operatorname{Iso}_{\C}(\C^n)$, such that $\C^n/\Gamma$ is compact. 
From the Lemma~\ref{geometric Bieberbach} we easily deduce the following generalisation:

\begin{lemma}\label{complex geometric Bieberbach} 
Let $X$ be a compact complex manifold. The following conditions are equivalent:
\begin{itemize}
\item[(1)] $X$ admits a K\"ahler metric with everywhere vanishing curvature tensor;
\item[(2)] $X = \C^n/\Gamma$ for some torsion-free complex crystallographic group $\Gamma \subset \operatorname{Iso}_{\C}(\C^n)$;
\item[(3)]$X = T/G$, where $T$ is a (compact) complex torus and $G$ is a finite group which acts on $T$ freely and holomorphically.
\end{itemize}
\end{lemma}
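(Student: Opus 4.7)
The strategy is to bootstrap Lemma~\ref{geometric Bieberbach} by carrying the parallel complex structure through each implication, and to handle the cycle $(1)\Rightarrow(2)\Rightarrow(3)\Rightarrow(1)$ separately.

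For $(1)\Rightarrow(2)$, assume $X$ has a flat K\"ahler metric $g$ with parallel complex structure $J$. By Lemma~\ref{geometric Bieberbach} the universal cover is isometric to $\R^{2n}$ with its standard metric. Since $J$ is parallel, on the simply connected universal cover it is determined by its value at one base point; hence one can choose the isometric identification with $\R^{2n}$ so that $J$ becomes the standard complex structure on $\C^n$. The deck transformation group $\Gamma$ preserves both $g$ and $J$ (the latter because $J$ descends to $X$), so it embeds into $\operatorname{Iso}_\C(\C^n)$, and it is torsion-free because $X$ is a manifold.

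For $(2)\Rightarrow(3)$, I apply Theorem~\ref{Bieberbach} to $\Gamma$ viewed as a real crystallographic group: it contains a normal finite-index translation subgroup $\Lambda$ of full rank $2n$, so $\Lambda\subset\C^n$ is a lattice and $T=\C^n/\Lambda$ is a complex torus. The finite quotient $G=\Gamma/\Lambda$ inherits a holomorphic action on $T$ from the $\operatorname{Iso}_\C$-action on $\C^n$. The delicate point is freeness: if some $g\in G$ fixed a point of $T$, some lift $\tilde g\in\Gamma$ would fix a point of $\C^n$; writing $\operatorname{Iso}_\C(\C^n)=U(n)\ltimes\C^n$ and using that the natural projection $\Gamma\to U(n)$ has finite image (again by Bieberbach), one sees that $\tilde g$ is conjugate to its linear part and therefore of finite order, contradicting torsion-freeness of $\Gamma$.

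For $(3)\Rightarrow(1)$, every holomorphic automorphism of $T=\C^n/\Lambda$ is induced by a complex-affine map of $\C^n$, so $G$ acts on $\C^n$ with a well-defined finite linear part. Averaging the standard Hermitian form over these linear parts yields a $G$-invariant positive-definite constant Hermitian form on $\C^n$; the corresponding flat K\"ahler metric on $T$ is $G$-invariant and descends to a flat K\"ahler metric on $X=T/G$. The main obstacle in the whole argument is the freeness step in $(2)\Rightarrow(3)$, where torsion-freeness of $\Gamma$ must be combined with the finiteness of the rotation group coming from Bieberbach's theorem; the other two implications amount to bookkeeping on top of the real case.
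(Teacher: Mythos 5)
Your proof is correct and follows essentially the same route as the paper: develop the flat Levi--Civita connection (equivalently, the real Bieberbach lemma plus the parallel complex structure) to identify the universal cover with $\C^n$, apply Bieberbach's theorem to extract the lattice and the finite rotation group, and average a flat metric to descend a K\"ahler structure. The extra care you take with freeness of the $G$-action in $(2)\Rightarrow(3)$ is a point the paper leaves implicit, and your argument for it (a lift fixing a point is conjugate to its finite-order linear part, contradicting torsion-freeness) is the standard and correct one.
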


\begin{rmk} The torsion-free condition can be omitted if we consider {\it (K\"ahler) orbifolds} instead of manifolds. 
\end{rmk}
\begin{proof}
{\it (1)} $\implies$ {\it (2)}. Let $\tilde{X} \to X$ be the universal covering. Lift a flat K\"ahler metric to $\tilde{X}$, choose local coordinates on a small ball $B \subset \tilde{X}$ and extend them globally, using the Levi-Civita connection. Since this connection is flat, it is well-defined and establishes a  biholomorphism $\tilde{X} \to \C^n$. Moreover, since the connection is orthogonal, this is an isometry.  Since $\pi_1(X)$ acts on $\tilde{X}$ by holomorphic isometries, it is complex crystallographic.

{\it(2)} $\implies$ {\it(3)} It follows from Bieberbach's theorem that $X$ admits a finite Galois covering by a smooth compact manifold $T$. Moreover, $T$ is isomorphic to $\C^n/\Lambda$ where $\Lambda$ is a free abelian group acting on $\C^n$ by holomorphic isometries. It follows that $T$ is a complex torus.

{\it (3)} $\implies$ {\it(1)} Take any flat K\"ahler metric on $T$ and average its K\"ahler form by the action of $G$. This will descend to a K\"ahler form on $X$ and the associated metric will be flat.
\end{proof}

\begin{rmk}
A manifold which satisfies one of the properties of Lemma~\ref{complex geometric Bieberbach} is sometimes called {\it K\"ahler-Bieberbach manifold}. Another frequent term for compact K\"ahler manifolds admitting flat K\"ahler metric, but not isomorphic to a complex torus, is {\it hyperelliptic manifolds}. In the context of algebraic geometry, one can also use the term {\it hyperelliptic varieties}.
\end{rmk}

\begin{rmk}
 If $X = \C^n/\Gamma = T/G$ is a flat K\"ahler manifold, then the monodromy of its flat Levi-Civita connection  is isomorphic to $G$. Moreover, if $p \colon T \to X$ is the natural projection and $x = p(0)$, the monodromy representation of $G$ on $T_xX \simeq T_0T$ is isomorphic to the representation of $G$ on $H_1(T, \R)$ via the exponential isomorphism $T_0T \simeq H_1(X, \R)$.
\end{rmk}

Let $X$ be a flat K\"ahler manifold isomorphic to  $T/G$ for a complex torus $T$ and a finite group $G$, which acts on $T$ freely. Modulo torsion the cohomology ring of $X$ is  determined by the representation $G \to \operatorname{GL}(H^1(T, \Q))$. Indeed,
\[
H^{\hdot}(X, \Q) = (H^{\hdot}(T, \Q))^G = (\Lambda^{\hdot}H^1(T, \Q))^G.
\]

\subsection{Examples.}\label{examples}
We finish this section by giving a number of examples of flat K\"ahler manifolds:
\begin{itemize}
\item[(1)] The obvious ones are just complex tori. The first non-trivial examples arise in complex dimension $2$. These are the so-called {\it bielliptic surfaces}. Here is an explicit example of a bi-elliptic surface. Take two elliptic curves $E_1$ and $E_2$. The group of automorphisms of $E_1$ preserving the origin is isomorphic to $\Z/d\Z$, where $d \in \{2, 4, 6\}$. Pick a $d$-torsion element $\tau \in E_2$. The group $\Z/d\Z$ acts on $E_1 \times E_2$ by the formula
\[
[m] \cdot (x, y) = ([m]\cdot x, y+ m\tau)
\]
(here $m$ is an integer and $[m]$ denotes its residue class in $\Z/d\Z$). This action is free and the quotient surface $S = \left (E_1 \times E_2 \right )/ (\Z/d\Z)$ admits a flat K\"ahler metric. See (\cite{BHPV}. Ch. V. 5, Example BII)  for further discussion.

\item[(2)]Observe that one can apply an analogue of the construction from the previous example for any pair of complex tori $T_1$ and $T_2$, and get   a flat K\"ahler manifold $$X:= (T_1 \times T_2)/(\Z/2\Z),$$ where the group acts as  multiplication by $(-1)$ on $T_1$ and as  a translation by a $2$-torsion element on $T_2$.

 Sometimes  $T_1$ admits a (group) automorphism of order $d>2$, and then one can use the same construction to obtain a free action of $\Z/d\Z$ on $T_1 \times T_2$. Such manifolds are particular cases of higher-dimensional analogues of bi-elliptic surfaces,  known as {\it Bagnera- de Franchis manifolds}\footnote{The bi-elliptic surfaces were initially studied by Bagnera and de Franchis in 1908, \cite{BdF}. See also an early work of Enriques and Severi \cite{ES}.} see e.g. \cite{Cat15}, Subsection 4.1. 
 
 Of course, if $T_1$ and $T_2$ are non-algebraic, then $X = (T_1 \times T_2)/(\Z/d\Z)$ is also non-algebraic. However, all constructions of this kind can lead us only to flat K\"ahler manifolds with $b_1 \neq 0$. Thus, for our purposes flat K\"ahler manifolds with trivial first cohomology are of the main interest.

\item[(3)]
 In \cite{DHS} many examples of flat K\"ahler manifolds are given. There one can also find a complete list of deformation types of $3$-dimensional flat K\"ahler manifolds with their Hodge and Betti numbers. For example,  there exists a $3$-dimensional flat K\"ahler manifold with trivial first cohomology and rotation group isomorphic to the dihedral group $D_4$ (i.e. the one of order $8$; see \cite{CD18} for the explicit construction). There is also a family of $n$-dimensional flat K\"ahler manifolds with vanishing first Betti number and rotation group isomorphic to $(\Z/2\Z)^{n-1}$\, the so-called {\it complex Hantzsche - Wendt manifolds}\footnote{By definition, the monodromy of the Levi-Civita connection on Hantzsche-Wendt manifold should also be contained in $\operatorname{SU}(n)$} (\cite{H}; \cite{DHS}, Section 4). Both Hantzsche-Wendt manifolds and flat K\"ahler $D_4$-threefolds possess no non-zero holomorphic $2$ - forms. Therefore, as we explained in  Introduction, they are always algebraic.

A priori it is unclear if non-algebraic flat K\"ahler manifolds with vanishing first Betti number exist. At the end of this paper (Section \ref{quaternionic doubles}) we produce several examples of such manifolds. We also give a general construction which starts with a flat K\"ahler manifold with $b_1=0$ and produces a new flat K\"ahler manifold, which also has $b_1=0$ and admits a (non-degenerate) holomorphic $2$-form. As it follows from Theorem \ref{deformations}, such manifolds can be deformed to non-algebraic ones.

\end{itemize}

Finally, we would like to emphasise, that   Bieberbach's theorem (Theorem \ref{Bieberbach})  together with  Lemma \ref{complex geometric Bieberbach}  implies that there exist only finitely many topological types of flat K\"ahler manifolds in every dimension.

\section{Non-algebraic deformations of flat K\"ahler manifolds with $h^{2,0} \neq 0$}\label{main section}

In this section we prove the following theorem:

\begin{thrm}\label{deformations}
Let $X$ be a compact complex manifold admitting a flat K\"ahler metric. Assume that $H^0(X, \Omega^2) \neq 0$. Then there exists a flat smooth holomorphic family $\pi \colon \mathcal{X} \to \CP^1$, such that
\begin{itemize}
\item All the fibres $X_t := \pi^{-1}(t)$ also admit flat K\"ahler metric.
 \item$\pi^{-1}(0) \simeq X$ 
 \item The set $\mathcal{R} := \{t \in \CP^1 | \ X_t \  \text{is algebraic}\} $ is at most countable.
 \end{itemize}
\end{thrm}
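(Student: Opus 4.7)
The plan is to use Lemma \ref{complex geometric Bieberbach} to write $X = T/G$ with $T = V/\Lambda$ a complex torus and $G$ a finite group acting freely and holomorphically. Every $G$-equivariant deformation of the complex structure $J_0$ on the real vector space $V$ (fixing the lattice $\Lambda$ and the affine $G$-action) yields another flat K\"ahler quotient, so I would reduce the theorem to producing a $\CP^1$-family of $G$-invariant complex structures on $V$ through $J_0$, with very general member non-algebraic.

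First, a short linear-algebra step shows that the $G$-equivariant Kuranishi space is non-trivial. The tangent space at $J_0$ to the Kuranishi space of $T$ is $\Hom(V^{1,0}, V^{0,1})$, so its $G$-equivariant subspace is $\Hom^G(V^{1,0}, V^{0,1})$. The $G$-invariant Hermitian metric on $V^{1,0}$ coming from the flat K\"ahler structure induces a $G$-equivariant identification $V^{0,1} \simeq (V^{1,0})^*$, hence
\[
\Hom^G(V^{1,0}, V^{0,1}) \;\simeq\; \bigl(S^2(V^{1,0})^*\bigr)^{G} \;\oplus\; \bigl(\Lambda^2(V^{1,0})^*\bigr)^{G}.
\]
The antisymmetric summand is $H^0(X, \Omega^2)$, non-zero by hypothesis. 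I would pick a non-zero $\sigma \in H^0(X, \Omega^2)$, let $\phi \in \Hom^G(V^{1,0}, V^{0,1})$ be the corresponding element, and (after a small perturbation within $\Hom^G(V^{1,0}, V^{0,1})$ if necessary) arrange that $\phi$ is an isomorphism of $G$-representations.

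Next I would build the family explicitly as a line in the Grassmannian of $G$-invariant $n$-planes in $V_\C$: for $[a:b] \in \CP^1$ set
\[
V^{1,0}_{[a:b]} \;:=\; \{\, av + b\phi(v) : v \in V^{1,0} \,\} \;\subset\; V_\C.
\]
Each $V^{1,0}_{[a:b]}$ is a $G$-invariant $n$-plane. The transversality $V^{1,0}_{[a:b]} \oplus \overline{V^{1,0}_{[a:b]}} = V_\C$ holds at $[1:0]$ and $[0:1]$ (where $V^{1,0}_{[a:b]}$ is $V^{1,0}$ and $V^{0,1}$ respectively) and, by openness, on a Zariski open subset of $\CP^1$ which one arranges to be all of $\CP^1$ by a careful choice of $\phi$. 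Gluing the fibrewise complex structures gives a holomorphic family $\mathcal{T} \to \CP^1$ of complex tori, and the diagonal $G$-action (trivial on $\CP^1$) yields $\mathcal{X} := \mathcal{T}/G \to \CP^1$ with $X_{[1:0]} \simeq X$; each fibre is flat K\"ahler by averaging a flat K\"ahler metric over $G$.

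Finally, I would argue that $\mathcal{R}$ is at most countable. Each rational class $\alpha \in H^2(X, \Q)$ defines a closed analytic condition $A_\alpha := \{t \in \CP^1 : \alpha \in H^{1,1}(T_t)\}$, which is either all of $\CP^1$ or finite. Classes with $A_\alpha$ finite contribute countably many $t$ to $\mathcal{R}$; the subspace $W \subset H^2(X, \Q)$ of classes with $A_\alpha = \CP^1$ contributes to $\mathcal{R}$ only if some such $\alpha$ is K\"ahler on some $T_t$. The main obstacle is thus to show $W$ contains no K\"ahler class, and this is where the hyperk\"ahler heuristic enters: the $\CP^1$-family just constructed is modelled on the twistor family of a holomorphic symplectic manifold (with $[1:0]$ and $[0:1]$ playing the roles of the antipodal points $I$ and $-I$), under which a K\"ahler form $\omega$ at $[1:0]$ picks up non-zero $(2,0)$ and $(0,2)$ parts for intermediate $[a:b]$, so $\omega \notin W$; more generally any class in $W$ must be ``quaternionically invariant'' along the twistor $\CP^1$, and a direct computation using the $S^2 \oplus \Lambda^2$ decomposition above shows that no such class lies in any K\"ahler cone. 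Alternatively, by Barlet's semicontinuity (cited in the introduction) it suffices to exhibit a single non-algebraic fibre in the family, which reduces the problem to the same Hodge-theoretic non-triviality of the deformation $\phi$.
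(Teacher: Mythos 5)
Your overall strategy (equivariant deformation of the covering torus driven by the $2$-form, then a Noether--Lefschetz count) is the right shape, but there are two genuine gaps, and the second one is the actual heart of the theorem. First, the family $V^{1,0}_{[a:b]} = \{av + b\phi(v)\}$ is not defined over all of $\CP^1$: writing it for $a\neq 0$ as the graph of $\mu = (b/a)\phi \colon V^{1,0}\to V^{0,1}$, the transversality condition $V^{1,0}_{[a:b]}\cap \overline{V^{1,0}_{[a:b]}}=0$ is equivalent to $\operatorname{Id}-\bar\mu\mu$ being injective, which fails precisely on the circles $|b/a| = 1/\sqrt{\lambda_i}$ for the nonzero eigenvalues $\lambda_i$ of $\bar\phi\phi$. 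Since $\phi\neq 0$ there is always at least one such circle; this is a real-analytic, not Zariski-closed, degeneracy locus, and no choice of $\phi$ removes it. (Already for an elliptic curve the pencil through $V^{1,0}$ and $V^{0,1}$ degenerates exactly on $|a|=|b|$.) This is why the paper does not take a line in the Grassmannian but builds a genuine twistor family: Lemma~\ref{local hyperhermitian} produces a $G$-equivariant quaternionic structure on a complement $F$ of $\Ker\eta$, the complex structures are $J_q = I|_E\oplus I_q$ for $q$ in the sphere of imaginary unit quaternions, and integrability of the total space is Kaledin's theorem. Note also that your reduction to the case where $\phi$ is an isomorphism discards exactly the delicate situation the paper must handle, namely a degenerate $\eta$, where the deformation only moves the directions in $F$ and leaves $E$ untouched.

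Second, the step you flag as ``the main obstacle'' is indeed the main content, and the heuristic you offer does not close it. Showing that the K\"ahler class of $\omega_0$ itself leaves $H^{1,1}$ does not show that \emph{no} rational class which stays of type $(1,1)$ for all $t$ can be ample on some fibre: when $\eta$ is degenerate, all of $\Lambda^2 E^*$ (and the $\operatorname{SU}(2)$-invariant part of $\Lambda^2 F^*$) survives in $W$, and one must rule these out. The paper does this in three steps: (i) a class in $H^{1,1}(X_q)$ for all $q$ is $\operatorname{SU}(2)$-invariant (Proposition~\ref{SU2 invariance}); (ii) if such a class were very ample, Bertini plus the Wirtinger--Federer equality $\int_D\omega_q^{n-1}=\operatorname{Vol}_g(D)$ (valid for all $q$ by $\operatorname{SU}(2)$-invariance of the pairing) forces the divisor $D$ to be complex-analytic for every $J_q$; (iii) the linear-algebra Proposition~\ref{linear trianalytic} then forces $T_xD=\mathcal{F}|_x$, so $D$ is a leaf of the foliation, and the degenerate nef class $[\omega_E]$ satisfies $\int_{X_q}\omega_E\cdot[D]^{n-1}=0$, contradicting ampleness. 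Your proposed ``direct computation in the $S^2\oplus\Lambda^2$ decomposition'' does not engage with any of this, and the fallback via Barlet semicontinuity only restates the problem (exhibiting one non-algebraic fibre is exactly as hard). Also note that the statement actually needed and proved is ``no class in $W$ is very ample,'' not ``no class in $W$ is K\"ahler''; the Wirtinger argument requires an honest divisor, so aiming at the K\"ahler cone directly would make the step harder, not easier.
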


\subsection{Twistor spaces: classical theory}\label{classical twistors} Let us, first of all, explain the heuristics that stay behind our theorem. It comes from the theory of Hyperk\"ahler manifolds. Recall that {\it a Hyperk\"ahler manifold} is a quadruple $(M, g, I, J)$, where $(M, g)$ is a Riemannian manifold and $I$ and $J$ are integrable complex structures, which anti-commute and both are K\"ahler with respect to $g$.  Together $I$ and $J$ generate an action of the algebra of (Hamiltonian) quaternions on the tangent bundle of $M$.

If $\omega_I$ and $\omega_J$ are the $g$-Hermitian forms of $I$ and $J$, the form $\sigma_I:= \omega_I + \sqrt{-1}\omega_J$ is a holomorphic symplectic form on $(M, I)$. Vice versa, if $(M, I)$ is a complex manifold of K\"ahler type which admits a holomorphic non-degenerate $2$-form, there exist such $g$ and $J$, that $(M, g, I, J)$ is a Hyperk\"ahler manifold (see e.g. \cite{Beau}, Prop. 4).\footnote{ In the literature $(M, I)$ is often assumed to be irreducible holomorphic symplectic and simply connected. A more general version of this statement follows from Beaville-Bogomolov decomposition theorem and some equivariant techniques which we describe below.} 

A holomorphically symplectic manifold is said to be {\it irreducible holomorphic symplectic} if the space of holomorphic $2$-forms on it is generated by a symplectic form. The Beauville-Bogomolov decomposition theorem (\cite{Beau}, Th. 2) implies that any Hyperk\"ahler manifold, up to a finite covering, is isomorphic to a product of irreducible ones and a complex torus.

 Consider the set of purely imaginary quaternions of unit norm. Every such quaternion $q$ satisfies  $q^2=-1$. This naturally forms a $2$-dimensional sphere $S^2$ inside $\mathbb{H}$ which we immediately identify with $\CP^1$. For each $q \in \CP^1$ one has the corresponding almost complex structure $I_q$ on the smooth manifold $M$.  

It is straightforward to see that all $I_q$ are integrable and K\"ahler with respect to $g$. 

Consider the product $M \times \CP^1$ with the  almost complex structure $I_{\tw}$ defined by the following rule: in a point $(m, q) \in M \times \CP^1$ the operator $I_{\tw}$ is equal to $I_q \oplus J_0$, where  $J_0$ is the standard complex structure on $\CP^1$. The complex manifold $\Tw(M):= (M \times \CP^1, I_{\mathrm{tw}})$ is called {\it the twistor space} of $M$. 

\begin{thrm}\label{twistors classic}
Let $M =  (M, g, I, J)$ be a Hyperk\"ahler manifold and $I_{\tw}$  the almost complex structure operator on $\Tw(M) = M \times \CP^1$ as above.
\begin{itemize}
\item[(1)] The complex structure $I_{\tw}$ is integrable and the natural projection $\Tw(M) \xrightarrow{\pi} \CP^1$ is holomorphic. Each fibre $\pi^{-1}(q)$ is a complex manifold $M_q := (M, I_q)$ with the complex structure $I_q$ as defined above.
\item[(2)] Assume that $M$ is compact and  irreducible holomorphic symplectic. The set $$R:= \{ q \in \CP^1 | M_q \text{ is algebraic}\}$$ is at most countable.
\end{itemize}
\end{thrm}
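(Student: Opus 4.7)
The plan for part (1) is to exploit the hyperk\"ahler condition to show simultaneous integrability of all the $I_q$, and then assemble them into an integrable complex structure on the product. The key input is that the Levi-Civita connection $\nabla$ of $g$ is torsion-free and satisfies $\nabla I = \nabla J = 0$; hence $\nabla K = 0$ and $\nabla I_q = 0$ for every $q = aI + bJ + cK$ with $a^2 + b^2 + c^2 = 1$. A parallel almost complex structure under a torsion-free connection is automatically integrable, so each $(M, I_q)$ is a complex manifold. To deduce integrability of $I_{\tw}$ on $M \times \CP^1$, I would identify $S^2$ with $\CP^1$ by stereographic projection and observe that once this is done, the coefficients $a, b, c$ can be packaged so that the $(1,0)$-distribution of $I_{\tw}$ is cut out by holomorphic equations; checking involutivity via a direct Nijenhuis computation then reduces to a short pointwise identity using that each individual $I_q$ is integrable. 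Holomorphicity of $\pi$ is immediate from the definition of $I_{\tw}$ on vertical tangent vectors.

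For part (2) my approach would be based on Huybrechts' projectivity criterion for irreducible holomorphic symplectic manifolds: $M_q$ is projective if and only if its $(1,1)$-subspace contains a rational class $\alpha$ with strictly positive Beauville--Bogomolov--Fujiki square, $Q(\alpha) > 0$. It therefore suffices to show that for every such $\alpha$ the set
\[
Z_\alpha := \{ q \in \CP^1 \mid \alpha \in H^{1,1}(M_q) \}
\]
is finite, since then $R \subset \bigcup_{\alpha \in H^2(M, \Q)} Z_\alpha$ is a countable union of finite sets. The condition $\alpha \in H^{1,1}(M_q)$ translates, via the BBF form, into the algebraic equation $Q(\alpha, \sigma_q) = 0$ on $q$, where $\sigma_q \in H^{2,0}(M_q)$ is the holomorphic symplectic class. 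Hence $Z_\alpha$ is either finite or all of $\CP^1$, and I would rule out the second case by a signature argument: the classes $\sigma_q$ sweep out the complexification of the real three-plane $P := \langle [\omega_I], [\omega_J], [\omega_K] \rangle \subset H^2(M, \R)$, which is $Q$-positive definite; if $Z_\alpha = \CP^1$ then $\alpha \in P^{\perp}$, but $Q$ has signature $(3, b_2 - 3)$ on an IHS manifold, so $P^{\perp}$ is negative definite, contradicting $Q(\alpha) > 0$.

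The main obstacle I anticipate is making the correspondence $q \mapsto [\sigma_q]$ sufficiently explicit to verify the sweeping-out claim: one needs that as $q$ ranges over the purely imaginary unit quaternions, the classes $[\sigma_q] = [\omega_{J_q}] + \sqrt{-1}\,[\omega_{K_q}]$ (for an appropriate choice of $J_q$ anticommuting with $I_q$) span the complexification of $P$ and not a smaller subspace. The integrability computation in part (1) is conceptually clear but calculationally somewhat tedious; once a good affine parametrisation of $\CP^1$ is fixed, it reduces to a one-line identity.
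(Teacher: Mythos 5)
Your proposal is essentially sound, but note first that the paper does not prove this theorem at all: it is quoted from the literature, with part (1) attributed to Kaledin and part (2) to Fujiki and Verbitsky. What you have written is a reasonable reconstruction of those arguments rather than an alternative to anything in the text. For part (1), your observation that $\nabla I_q=0$ for the torsion-free Levi-Civita connection forces each $(M,I_q)$ to be a complex manifold is correct, but be careful: integrability of every individual $I_q$ is \emph{not} by itself sufficient for integrability of $I_{\tw}$ on the product. The genuine content of the Kaledin/Atiyah--Hitchin--Singer theorem is that in an affine coordinate $\zeta$ on $\CP^1$ the $(1,0)$-forms of $I_\zeta$ can be spanned by expressions depending holomorphically (in fact affinely) on $\zeta$, after which the Nijenhuis computation closes up; this is standard but is more than the ``one-line identity'' you promise, and your sketch should be read as deferring to the cited source at exactly this point. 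For part (2), your argument via the Beauville--Bogomolov--Fujiki form is essentially Verbitsky's: you only need the easy direction of Huybrechts' projectivity criterion (an ample class on $M_q$ is a rational $(1,1)$-class with $Q(\alpha)>0$, by the Fujiki relation), not the hard converse; the period map $q\mapsto[\sigma_q]$ traces a conic in $\mathbb{P}(H^2(M,\C))$ whose image spans the complexification of $P=\langle[\omega_I],[\omega_J],[\omega_K]\rangle$ (take $q=I,J,K$), so $Z_\alpha$ is the zero locus of a quadratic polynomial in $\zeta$ and is finite unless $\alpha\perp P$, and the signature $(3,b_2-3)$ rules that out for $Q(\alpha)>0$, exactly as you say. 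It is worth noticing that this is the same mechanism the author deploys later, in Section 2.5, for flat K\"ahler manifolds that are not irreducible holomorphic symplectic: there the BBF form is unavailable and is replaced by $\operatorname{SU}(2)$-invariance of classes of type $(1,1)$ for all $q$ together with the Wirtinger inequality, so your signature argument is the IHS-specific shortcut that the paper's main theorem must avoid.
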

\begin{proof}
See \cite{K} for the first statement and \cite{Ver}, Theorem 2.2. for the second. Originally the second statement appeared in \cite{Fuj} (Theorem 4.8(2)).
\end{proof}

One would like to say that  Theorem~\ref{twistors classic} immediately implies  Theorem~\ref{deformations} in case when $X$ admits a holomorphic symplectic form. Unfortunately, even if a flat K\"ahler manifold is holomorphically symplectic,  it is usually not irreducible holomorphic symplectic. This can be overruled by introducing equivariant versions of some classical linear algebraic arguments, which we do in the next subsection.

What seems to be a more serious problem is that $X$ might carry only degenerate holomorphic $2$-forms. Our plan is to take any non-zero holomorphic $2$-form on $X$ and, using the flatness assumption, imitate the  twistor family in the direction, in which this form is non-degenerate, without changing the holomorphic structure on the kernel of this form.

\subsection{Equivariant  $\C$-symplectic linear algebra.}\label{linear algebra}
Now we are going to describe some linear algebraic constructions relating complex symplectic structures to Hermitian quaternionic structures. This is rather classical, but we will also need an equivariant version of it.

Fix the following notation. Let $V$ be a real vector space with  a complex structure operator $$I \colon V \to V, \ I^2 = - 1.$$ Then the exterior powers of $V^*_{\C} := V^* \o \C$ carry the decomposition $$\Lambda^kV_{\C}^* = \bigoplus_{p+q=k}\Lambda^{p,q}V^*_{\C}.$$  Let $\sigma \in \Lambda^{2,0}V^*_{\C}$ be a complex non-degenerate form of type $(2,0)$. A {\it compatible hyper-Hermitian  structure} on $V$ is a pair $(g, J)$, where $J$ is an automorphism of $V$ satisfying $J^2=-1$ and $JI=-IJ$, and $g$ is a positive scalar product on $V$, Hermitian with respect to both $I$ and $J$. Observe that in this case the algebra generated by $I$ and $J$ is isomorphic to the algebra of Hamiltonian  quaternions.

\begin{lemma}\label{local hyperhermitian}
Assume that a finite group $G$ acts on $V$ linearly, commuting with $I$ and preserving $\sigma$. Then there exists a $G$-equivariant  compatible hyper-Hermitian structure (that is,  both $g$ and $J$ can be chosen to be $G$-invariant).
\end{lemma}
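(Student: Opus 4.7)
The plan is to construct $(g, J)$ from $\sigma$ and a $G$-averaged auxiliary metric, so that $G$-equivariance comes for free. The underlying fact — that a holomorphic symplectic form on a Hermitian vector space determines a compatible complex structure by polar decomposition — is classical; the only work is to carry it out naturally enough to respect the $G$-action.

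First I would average any $I$-Hermitian inner product on $V$ over $G$ to obtain a $G$-invariant $I$-Hermitian metric $g$. Writing $\sigma = \sigma_1 + \sqrt{-1}\,\sigma_2$ with $\sigma_1, \sigma_2$ real $2$-forms on $V$, I define $J_0 \in \operatorname{End}_{\R}(V)$ by the rule $g(J_0 v, w) = \sigma_1(v, w)$. Then $J_0$ is manifestly $G$-equivariant and $g$-antisymmetric. Unpacking the type condition $\sigma(Iv, w) = \sqrt{-1}\,\sigma(v, w) = \sigma(v, Iw)$ gives $\sigma_1(Iv, w) = -\sigma_2(v, w) = \sigma_1(v, Iw)$, and combining this with $g$ being $I$-Hermitian yields $J_0 I = -I J_0$.

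Next I would check that $J_0$ is invertible: if $\sigma_1(v, \cdot) \equiv 0$, then by the identity $\sigma_1(v, Iw) = -\sigma_2(v, w)$ also $\sigma_2(v, \cdot) \equiv 0$, so $v=0$ by non-degeneracy of $\sigma$. Hence $-J_0^2 = J_0^* J_0$ is positive-definite; it is also $I$-linear (being a composition of two $I$-antilinear maps) and $G$-equivariant. Its positive square root $P := (-J_0^2)^{1/2}$, obtained by functional calculus, inherits both properties and commutes with $J_0$. Setting $J := J_0 P^{-1}$, a direct computation yields $J^2 = J_0^2 P^{-2} = -1$, $JI = -IJ$, and $J^* = -J$; this last identity together with $J^2 = -1$ shows $g$ is $J$-Hermitian. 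All ingredients being $G$-invariant, so is $J$, which gives the compatible hyper-Hermitian structure.

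The main (and essentially only) delicate point is the polar-decomposition step: one must apply functional calculus to $-J_0^2$, which is $I$-linear and $G$-equivariant, rather than to $J_0$ itself, so that the resulting $P^{-1}$ commutes with $I$ and the $I$-antilinearity of $J_0$ is transferred to $J$. Everything else is bookkeeping, and the role of $G$ is confined to the very first step of choosing an invariant metric.
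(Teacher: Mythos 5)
Your proof is correct and follows essentially the same route as the paper: both define the skew operator $A=g^{-1}\circ\sigma_1$ (your $J_0$), observe it anticommutes with $I$ and commutes with $G$, and normalise it by the functional-calculus square root of $-A^2$ to get $J$. The only cosmetic difference is that the paper also rescales the metric to $h_1(S\cdot,\cdot)$ while you keep the averaged metric and verify $J$-Hermitianness directly from $J^*=-J$, $J^2=-1$; both satisfy the stated definition.
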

\begin{proof}
{\bf Step 1.} Denote by $\sigma_1$ the real part of $\sigma$ and let $h_1$ be any $G$-invariant  $I$-Hermitian metric on $V$. Consider the operator $A= h_1^{-1} \circ \sigma_1 \colon V \to V^* \to V$.  In other words, $A$ is defined by the property
\[
h_1(Ax, -) = \sigma_1(x, -).
\]
Of course, $A$  is invertible and commutes with every element in $G$. Observe also, that $A$ anti-commutes with $I$. Indeed,
\[
h_1(IAx, y) = -h_1(Ax, Iy) = - \sqrt{-1}\sigma_1(x, y), 
\]
while
\[
h_1(AIx, y) = \sigma_1(Ix, y) = \sqrt{-1}\sigma_1(x, y).
\]

{\bf Step 2.} 
Since $\sigma_1$ is skew-symmetric, we see that $A$ is also skew-symmetric (with respect to $h_1$), thus $A^2$ is symmetric. Moreover, $h_1(A^2x, x) = -h_1(Ax, Ax)$, so $A^2$ is negative definite. Therefore $A^2$ can be diagonalized in an $h_1$-orthonormal basis:
\[
A^2 = 
\begin{pmatrix} \alpha_1 & 0 & 0 & \ldots  &0\\
0& \alpha_2 & 0 &\ldots& 0 \\
0& 0& \alpha_3& \ldots & 0\\
0& \ldots &\ldots & \ldots & 0\\
0& \ldots & \ldots & \ldots & \alpha_n
\end{pmatrix}
\]
with $\alpha_i \in \R_{<0}$. Consider a polynomial $P \in \mathbb{R}[t]$ which satisfies $P(\alpha_i) = \frac{1}{\sqrt{-\alpha_i}}$  for $1 \le i \le n$ and denote $S:= P(A^2)$.  By construction, this is a symmetric positive operator, which commutes both with $A$ and with elements from $G$.  We also have $(AS)^2 = -\operatorname{Id}_{V}$.

Since $A$ anti-commutes with $I$, and $S$ is a polynomial of even degree in $A$, the operator $S$ commutes with $I$.

{\bf Step 3.} Define $g(x, y):= h_1(Sx, y)$ and $J:=AS$. It is clear that both $g$ and $J$ are $G$-invariant and that $J$ is an operator of an almost complex structure.

 Since $S$ commutes with $I$, the metric $g$ is $I$-Hermitian. What is left to check is that $J$ is $g$-orthogonal and anti-commutes with $I$.

The first property follows from an explicit computation, which uses the fact that $A$ is skew-symmetric and $S$ is symmetric. The second follows from
\[
\begin{gathered}
g(IJx,y) =g(Jx, -Iy) = h_1(AS^2x, -Iy) =\\
=\sigma_1(S^2x, -Iy) = -\sqrt{-1}\sigma_1(S^2x, y),
\end{gathered}
\]
while
\[
\begin{gathered}
g(JIx, y) = g(Ix, -Jy) = h_1(AS^2y, -Ix) = \\
= \sigma_1(S^2y, -Ix) = -\sigma_1(-Ix, S^2y) = \sqrt{-1}\sigma_1(S^2x, y)
\end{gathered}
\]
Here we used the fact that $\sigma_1(S^2x, y) = h_1(AS^2x, y) = h_1(Ax, S^2y) = \sigma_1(x, S^2y)$, and, what is more important, the fact that $\sigma_1$ is a real part of $(2,0)$-symplectic form with respect to $I$.
 \end{proof}
 
 Let us also prove the following linear algebraic proposition, which will play an important role in the proof of the main theorem:
 \begin{predl}\label{linear trianalytic}
 Let $U$ and $W$ be finite-dimensional real vector spaces. Assume that $I$ is a complex structure operator on $U$ and $W$ is endowed with a faithful action of the quaternionic algebra $\mathbb{H} \times W \to W$. Therefore, for every purely imaginary $q \in \mathbb{H}$ with $q^2=-1$, we obtain a complex structure operator $I_q \colon W \to W$. Let $H \subset U \oplus W$ be a vector subspace of real codimension $2$, which is a complex hyperplane in $(U \oplus W, J_q:= I \oplus I_q)$  for every $q \in \CP^1 \subset \mathbb{H}$. Then $H = W$ (and consequently $\dim_{\C} U = 1$).
 \end{predl}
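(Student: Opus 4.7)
The plan is to leverage the $\CP^1$-family of complex structures $J_q = I \oplus I_q$: since $J_q - J_{q_0}$ vanishes on $U$ and realises the purely imaginary quaternions on $W$, taking differences inside $H$ should force $H$ to split along the decomposition $U \oplus W$ with the $W$-part quaternionically closed. The conclusion then falls out of a codimension count, playing the parity of complex subspaces against the mod-$4$ rigidity of quaternionic subspaces.

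For the decomposition step, I would fix $h = u + w \in H$ and observe that $J_q(h) - J_{q_0}(h) = (I_q - I_{q_0})(w) \in H \cap W$ for all $q, q_0 \in \CP^1$. Since the $\mathbb{H}$-action on $W$ is faithful, the family $\{I_q\}$ spans the three-dimensional space of purely imaginary quaternions inside $\operatorname{End}(W)$, so the three quaternionic generators $\mathbf{I}, \mathbf{J}, \mathbf{K}$ all send $w$ into $H \cap W$. Applying the same observation to $\mathbf{I}(w) \in H$ (whose $U$-component is zero) gives $\mathbf{I}^{2}(w) = -w \in H \cap W$, whence $w \in H \cap W$ and $u \in H \cap U$. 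This yields the splitting $H = (H \cap U) \oplus (H \cap W)$ in which $H \cap U$ is an $I$-invariant subspace of $U$ and $H \cap W$ is a quaternionic subspace of $W$.

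The codimension count then closes the argument: $\codim_{\R}(H \cap U, U) + \codim_{\R}(H \cap W, W) = 2$, with the first summand even (complex subspace) and the second divisible by $4$ (quaternionic subspace). The only admissible pair is $(2, 0)$, which forces $W \subset H$ and exhibits $H \cap U$ as a complex hyperplane in $U$; the equality $H = W$ is then equivalent to $\dim_{\C} U = 1$, which in the intended application is dictated by the geometric context from which the decomposition $U \oplus W$ arose. The main conceptual hinge is the first step --- extracting the full quaternionic action on $W$ from the one-parameter family by differences --- while the main numerical hinge is the observation that a quaternionic subspace has real codimension a multiple of $4$, which is exactly what makes the codimension-$2$ hypothesis so rigid.
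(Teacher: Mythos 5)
Your core argument is the same as the paper's. Both proofs hinge on the two facts you isolate: $H\cap W$ is an $\mathbb{H}$-submodule of $W$ (since $H$ and $W$ are each preserved by every $J_q$ and the operators $I_q$ generate the imaginary quaternions), hence has real codimension in $W$ divisible by $4$; and that codimension is at most $\codim_{U\oplus W}H=2$, which the paper reads off directly from the injection $W/(H\cap W)\hookrightarrow (U\oplus W)/H$. Together these give $W\subseteq H$. Your preliminary splitting $H=(H\cap U)\oplus(H\cap W)$ via the difference trick is correct but not needed for this conclusion: once $W\subseteq H$ is known the splitting comes for free, and the paper skips it entirely.

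The one place you diverge is the last step, and there your caution is justified rather than a defect. As you note, the hypotheses only yield $H=L\oplus W$ with $L\subseteq U$ a complex hyperplane, so $H=W$ is equivalent to $\dim_{\C}U=1$, and this does not follow from the stated assumptions: for $U=\C^2$, $W=\mathbb{H}$ and $H=L\oplus W$ with $L\subset U$ any complex line, every hypothesis of the proposition holds while $H\neq W$. The paper's own proof glosses over this with the sentence ``this is possible only if $H=W$ and $\dim_{\C}U=1$,'' which is a non sequitur. So the proposition as stated needs either the extra hypothesis $\dim_{\C}U=1$ or the weaker conclusion $W\subseteq H$; note that in the application (Proposition~\ref{No ample trianalytic}) the space $U=\mathcal{E}|_x$ can have arbitrary dimension and what is actually invoked is $T_xD\subseteq\mathcal{F}|_x$, so the weaker conclusion does not substitute verbatim there either. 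In short, you prove exactly what the paper's argument actually establishes, by essentially the same mechanism, and you correctly flag the unproved remainder.
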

 \begin{proof}
Consider $H':= H \cap W$. By the assumptions, this is an $\mathbb{H}$-submodule in $W$. Therefore, its real dimension is divisible by $4$. But $\codim_{W}H' \le \codim_{U \oplus W} H = 2$. Hence $\codim_{W}H' = 0$ and $H'=W$.Thus, $H$ contains $W$. This is possible only if $H = W$ and $\dim_{\C}U = 1$.
 
 \end{proof}

\subsection{Construction of the twistor family.}\label{construction}

Starting from now, let $X$ be a compact flat K\"ahler manifold and $p \colon T \to X$ be a Galois covering by a complex torus $T$, so that $X = T/G$ for some finite group $G$. Assume that $\eta \in H^0(X, \Omega^2)$ is a non-zero holomorphic $2$-form.  It defines a morphism of holomorphic vector bundles $\eta \colon T_X \to T^*_X$. Consider  its kernel $\mathcal{E}:= \Ker \eta \subset TX$.  A priori this is only a holomorphic subsheaf, but since $p^*\eta$ is a holomorphic $2$-form on $T$ and all holomorphic forms on a complex torus are invariant under parallel transport, it is of constant rank, hence a $\mathcal{E} \subset TX$ is a subbundle. 

Moreover, since $\eta$ is closed, $\mathcal{E}$ is involutive and defines a holomorphic foliation on $X$.

\begin{predl}\label{global hyperhermitian}
There exists a Riemannian metric $g$ on $X$, a family of complex structures $J_q$ on $X$, parametrised by $\CP^1$, and an integrable subbundle $\mathcal{F} \subset TX$, such that:
\begin{itemize}
\item[(1)] The metric $g$ is flat;
\item[(2)] all the complex structures $J_q$ are integrable and K\"ahler with respect to $g$;
\item[(3)] the subbundles $\mathcal{E}$ and  $\mathcal{F}$ are holomorphic with respect to $J_q$ for any $q$;
\item[(4)] $TX = \mathcal{E} \oplus \mathcal{F}$ as a holomorphic vector bundle and $\mathcal{F} \perp_g \mathcal{E}$.
\end{itemize}
\end{predl}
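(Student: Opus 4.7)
The plan is to pass to the universal cover of $X$ and reduce everything to a piece of $G$-equivariant linear algebra on the complex vector space $V = \C^n$. Write $X = T/G$ as in Lemma~\ref{complex geometric Bieberbach}, with $T$ a complex torus and $G$ a finite group acting freely and holomorphically, and let $V \to T$ denote the universal covering; endow $V = \C^n$ with its standard complex structure $I$. The pullback $\tilde\eta$ of $\eta$ to $V$ is holomorphic and invariant under all deck transformations, hence in particular under the full-rank group of translations by the kernel lattice of $V \to T$; therefore $\tilde\eta$ has constant coefficients and defines a $G$-invariant element of $\bigwedge^{2,0}V^*$. Its kernel $K := \{v \in V : \tilde\eta(v, \cdot) = 0\}$ is a $G$-stable $I$-complex subspace of $V$, and the corresponding translation-invariant distribution on $V$ descends to the subbundle $\mathcal{E} \subset TX$ of the statement.

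Next, let $h$ be a $G$-invariant $I$-Hermitian scalar product on $V$ (obtained by averaging an arbitrary one over $G$), and set $F := K^{\perp_h}$. Then $V = K \oplus F$ is a $G$-invariant decomposition into $I$-complex subspaces, and $\tilde\eta$ restricts to a non-degenerate, $G$-invariant $(2,0)$-form on $F$. Applying Lemma~\ref{local hyperhermitian} to $(F, I|_F, \tilde\eta|_F, G)$ yields a $G$-invariant positive-definite scalar product $g_F$ on $F$ together with a $G$-equivariant complex structure $J_F$ on $F$ anticommuting with $I|_F$ and $g_F$-Hermitian. Thus $I|_F$ and $J_F$ generate a $G$-equivariant action of the quaternion algebra on $F$; for every unit purely imaginary quaternion $q \in \CP^1$ we obtain a $G$-equivariant, $g_F$-Hermitian complex structure $I_q$ on $F$.

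Now define the $G$-invariant scalar product $g := h|_K \oplus g_F$ on $V$ and, for each $q \in \CP^1$, the $G$-equivariant endomorphism $J_q := I|_K \oplus I_q$. Both are constant tensors on $V$, so they descend first to $T$ and then to $X$. The resulting Riemannian metric $g$ on $X$ is flat, because a translation-invariant metric on $V$ has vanishing Levi-Civita curvature; each $J_q$ is a constant almost complex structure and therefore integrable, with constant---hence closed---K\"ahler form $g(J_q\,\cdot, \cdot)$, so $(X, g, J_q)$ is flat K\"ahler for every $q \in \CP^1$. The subspaces $K$ and $F$ are $J_q$-invariant by the very definition of $J_q$, so the associated subbundles $\mathcal{E}$ and $\mathcal{F}$ of $TX$ are $J_q$-holomorphic for every $q$, $g$-orthogonal, and together realise $TX = \mathcal{E} \oplus \mathcal{F}$; they are involutive because translation-invariant distributions on $V$ have vanishing Lie brackets.

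The only non-formal input is the equivariant existence of a compatible hyperhermitian structure on the symplectic factor $F$: naive averaging over $G$ produces a $G$-invariant metric but does not preserve the quadratic condition $J^2 = -\mathrm{Id}$ on an almost complex operator. This is exactly what Lemma~\ref{local hyperhermitian} achieves, via the polynomial-in-$A^2$ construction carried out there. Once that lemma is available, the rest of the proof is routine bookkeeping about translation-invariant, $G$-equivariant tensors on $V$ and their descent to the quotient $X$.
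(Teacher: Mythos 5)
Your proposal is correct and follows essentially the same route as the paper: reduce to $G$-equivariant linear algebra on the tangent space at the origin of the covering torus, split off a $G$-invariant $I$-complex complement $F$ of $\ker\eta_0$, apply Lemma~\ref{local hyperhermitian} on $F$, and descend the resulting translation-invariant tensors to $X$. The only cosmetic difference is that you realise the complement explicitly as the orthogonal complement with respect to an averaged Hermitian metric, while the paper simply invokes the existence of a $G$-invariant complex complement; the verification of flatness, integrability and the Kähler condition via translation-invariance matches the paper's use of the flat torsion-free connection preserving all invariant tensors.
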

\begin{proof}
 Denote by $V$ the (real) tangent space to the origin in $T$. The complex structure on $T$ induces  a complex structure operator $I$ on $V$.  As usual, we can identify $V$ with $H_1(T, \R)$. Therefore, $V$ is naturally endowed with  a  complex-linear action of $G$. Every $G$-invariant tensor on $V$ defines a homogeneous $G$-invariant tensor on $T$, which descends to $X = T/G$.
 
 Let $\eta_0:= (p^*\eta)(0) \in \Lambda^{2,0}V^*_{\C}$ and $E =( p^*\mathcal{E})_0 = \Ker \eta_0$. Since $\eta_0$ is a $G$-invariant complex bilinear form, $E$ is a $G$-invariant subspace of $V$,and is preserved by $I$. Since $G$ is finite, we can find a $G$-invariant $I$-complex complement $F \subset V$, so that $V = E \oplus F$ as a  complex representation of $G$. The form $\eta_0$ restricts to a non-degenerate form on $F$.  By  Lemma~\ref{local hyperhermitian} there exists a $G$-invariant compatible hyper-Hermitian structure on $F$. Let $g_F$ be the corresponding (hyper-)Hermitian metric on $F$ and $(I_q)_{q \in \CP^1}$ be the associated family of complex structure operators. 
 
 Take a $G$-invariant $I$-Hermitian metric $g_E$ on $E$ and put $g_0 := g_E \oplus g_F$ and $$J_{q,0}:= I|_E \oplus I_q.$$  As  we have explained above, the metric $g_0$ induces a $T$-invariant metric  $\tilde{g}$ on $T$, which descends to  a metric $g$ on $X$. Similarly,  the family of  operators $J_{q,0}$ induce a family of $g$-orthogonal almost complex structures $J_q$ on $X$, which are obtained by descending the $G$-invariant family of homogeneous complex structures $\tilde{J_q}$ on $T$.  Finally, $F$ induces a homogeneous $G$-invariant subbundle $\widetilde{F}$ in the holomorphic tangent bundle of $T$, which descends to a subbundle $\mathcal{F}$ on $X$. This foliation is clearly preserved by every $J_q$ and its fibres are $g$-orthogonal complements to the fibres of $\mathcal{E}$ in each point.
 
 Now we need to check the properties listed above. Since all of them are local, it is sufficient to check them for the pre-images of these objects in $T$, that is for $\tilde{g}, \widetilde{J_q}. \widetilde{F}$ and $\widetilde{\mathcal{E}} = p^*\mathcal{E}$. There exists a flat torsion-free connection $\nabla$ on $T$, which preserves all $T$-invariant tensors. Therefore $\nabla \tilde{g}= 0$ and $\nabla$ is the Levi-Civita connection of  $\tilde{g}$. In particular, $\tilde{g}$ is flat.  Because $\nabla$ is torsion-free and preserves $\widetilde{\mathcal{F}}$, this subbundle is integrable. From $\nabla \widetilde{J_q} = 0$ for every $q$, we deduce {\it (2)}. The properties {\it (3)} and {\it (4)} follow from the construction.

\end{proof}

Now we are able to define $\mathcal{X}$ as the ``twistor space'' of $(X, g, J_q)$, that is, an almost complex manifold isomorphic to  $\CP^1 \times X$ as a smooth manifold, and endowed with the almost complex structure
\[
(I_{\tw})_{(q,x)} = I_0 \oplus J_q,
\]
where $I_0$ is the standard complex structure on $\CP^1$.

\begin{predl}\label{twistor integrability}
The almost complex structure $I_{\tw}$ is integrable.
\end{predl}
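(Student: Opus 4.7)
The plan is to reduce the integrability of $I_{\tw}$ to the classical twistor theorem (Theorem~\ref{twistors classic}) applied to a hyper-K\"ahler vector space, exploiting that the construction of Proposition~\ref{global hyperhermitian} decouples into a ``constant'' complex factor along $\mathcal{E}$ and a genuine twistor family along $\mathcal{F}$.

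Since integrability of an almost complex structure is a local condition, I would first pull $I_{\tw}$ back along the composition $\C^n \to T \to X$ and work on $\CP^1 \times \C^n$. On the universal cover the $G$-invariant splitting $V = E \oplus F$ from the proof of Proposition~\ref{global hyperhermitian} is translation invariant, so it integrates to a global product decomposition $\C^n = E \times F$ of complex vector spaces. Under this decomposition the pulled-back almost complex structure at $(q,x,y) \in \CP^1 \times E \times F$ is $I_0 \oplus I|_E \oplus I_q|_F$, and a rearrangement of factors identifies $(\CP^1 \times \C^n, I_{\tw})$ with the product of almost complex manifolds
\[
(E,\, I|_E)\, \times\, \bigl(\CP^1 \times F,\, I_0 \oplus I_q|_F\bigr).
\]

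The first factor is a complex vector space, hence its complex structure is integrable. The second factor is, by construction, the twistor space of the flat hyper-K\"ahler manifold $F$ endowed with the hyper-Hermitian structure $(g_F, I|_F, J|_F)$ produced by Lemma~\ref{local hyperhermitian}, so Theorem~\ref{twistors classic}(1) applies and gives integrability of its twistor complex structure. Since the Nijenhuis tensor of a product almost complex structure is the direct sum of the Nijenhuis tensors of the factors, the product is integrable; descending back to $X$ via the Galois cover (which is a local diffeomorphism and respects the splitting because $G$ preserves $E$ and $F$) then yields integrability of $I_{\tw}$ on $\CP^1 \times X$.

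The only point requiring genuine care is the pointwise compatibility of the identifications made on $\CP^1 \times \C^n$, but this is immediate from the definitions $I_{\tw} = I_0 \oplus J_q$ and $J_q = I|_{\mathcal{E}} \oplus I_q|_{\mathcal{F}}$ together with the translation invariance of the splitting $V = E \oplus F$. No direct computation of the Nijenhuis tensor or of higher commutators should be required; the analytic content is fully absorbed into Lemma~\ref{local hyperhermitian} and the classical twistor construction.
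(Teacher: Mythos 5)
Your proposal is correct and follows essentially the same route as the paper: pass to the universal cover, use the translation-invariant splitting $V = E \oplus F$ to identify the pulled-back twistor space with the product $E \times \Tw(F)$, and invoke the classical integrability theorem (Theorem~\ref{twistors classic}, i.e.\ Kaledin's result) for the $\Tw(F)$ factor. The extra details you supply (additivity of the Nijenhuis tensor under products, descent along the Galois cover) are exactly the points the paper leaves implicit.
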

\begin{proof}
Since integrability of the complex structure is a local property, we can prove it for a universal cover $\widetilde{\mathcal{X}} \to \mathcal{X}$. It is isomorphic to $E \times \Tw(F)$, where $E$ and $F$ are complex vector spaces as above and $F$ is endowed with the Hyper-Hermitian structure $(g, J_q)$. The classical twistor space $\Tw(F)$ is a complex manifold by Theorem \ref{twistors classic}, \cite{K}.
\end{proof}

\begin{rmk}
In \cite{CC} Catanese and Corvaja gave a description of  the Teichm\"uller space $\mathfrak{T}(X)$ of a flat K\"ahler manifold $X$. Every family of the form $\mathcal{X} = (X, g, J_q)$ defines a  holomorphic map $\CP^1 \to \mathfrak{T}(X)$. Studying the images of  these curves might be useful for better understanding of the geometry of $\mathfrak{T}(X)$.
\end{rmk}

\subsection{$\operatorname{SU}(2)$-action.}\label{SU2}

The group generated by unitary imaginary quaternions inside $\mathbb{H}^{\times}$ is isomorphic to $\operatorname{SU}(2)$. If $X$ is a flat K\"ahler manifold endowed with a non-zero holomorphic $2$-form, from Proposition \ref{global hyperhermitian} we obtain a faithful action of the quaternionic algebra $\mathbb{H}$ on the tangent bundle of $X$, which induces an action of $\operatorname{SU}(2)$ on smooth sections of the tangent bundle. In the notations of the previous subsection, it is generated by the operators $(\operatorname{Id}_{\mathcal{E}} \oplus I_{q})_{q \in \CP^1}$. 

This action can be extended to a linear action of $\operatorname{SU}(2)$ on the space of differential forms on $X$.

\begin{predl}\label{SU2 to cohomology} 
The defined above $\operatorname{SU}(2)$-action descends to action by algebra automorphisms on cohomology. This action preserves the intersection form.
\end{predl}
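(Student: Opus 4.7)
The plan is to reduce the statement to an elementary fact about the exterior algebra of $V := T_0 T \simeq H_1(T, \R)$. Since $X = T/G$ with $G$ finite and acting freely, the canonical isomorphism $H^{\hdot}(X, \R) = H^{\hdot}(T, \R)^G$ is one of graded algebras, and on the flat torus $T$ every cohomology class is uniquely represented by a translation-invariant form. This gives $H^{\hdot}(T, \R) \cong \Lambda^{\hdot}V^*$ as a graded algebra, and under this identification the intersection pairing on $X$ is, up to a positive scalar, the standard top-degree pairing on $(\Lambda^{\hdot}V^*)^G$ determined by a chosen volume element on $V$.

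Next, the $\operatorname{SU}(2)$-action from Proposition~\ref{global hyperhermitian} is homogeneous on $T$: at the origin it is the fixed linear action on $V = E \oplus F$ that is trivial on $E$ and given on $F$ by the quaternionic structure. Any linear representation on a vector space extends tautologically to an action by graded algebra automorphisms on the exterior algebra, which yields an $\operatorname{SU}(2)$-action on $\Lambda^{\hdot}V^*$ already by algebra automorphisms. The key point is that the hyper-Hermitian structure on $F$ was chosen $G$-equivariantly in Lemma~\ref{local hyperhermitian}; this makes the $\operatorname{SU}(2)$-action commute with $G$, so it restricts to an algebra-automorphism action on $(\Lambda^{\hdot}V^*)^G = H^{\hdot}(X, \R)$.

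For the intersection form, it suffices to check that each $q \in \operatorname{SU}(2)$ acts trivially on the one-dimensional space $\Lambda^{\mathrm{top}}V^*$; combined with the algebra-automorphism property this immediately gives
\[
\int_X (q\cdot\alpha) \wedge (q\cdot\beta) \;=\; \int_X q\cdot(\alpha\wedge\beta) \;=\; \int_X \alpha\wedge\beta.
\]
Indeed, $\operatorname{SU}(2)$ acts trivially on $E$ and by unit quaternions on $F$, and the Hermitian condition on $g_F$ guarantees that every unit quaternion acts as a $g_F$-isometry; since $\operatorname{SU}(2)$ is connected, this action lies in the identity component $\operatorname{SO}(V)$, hence is trivial on $\Lambda^{\mathrm{top}}V^*$.

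The only genuine subtlety is the $G$-equivariance that allows the construction to descend from $T$ to $X$, and this is precisely what Lemma~\ref{local hyperhermitian} was built to supply; once that is in place everything reduces to the well-known manipulation of invariant forms on a flat torus.
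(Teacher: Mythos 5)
Your proof is correct and follows essentially the same route as the paper: both identify $H^{\hdot}(X,\R)$ with the $G$-invariant translation-invariant forms on $T$ (the paper passes through harmonic forms for the flat metric, you pass through $(\Lambda^{\hdot}V^*)^G$ directly), observe that the $\operatorname{SU}(2)$-action preserves this subalgebra and acts by algebra automorphisms, and deduce invariance of the pairing from the fact that the action is by isometries. Your explicit justification that the action is trivial on $\Lambda^{\mathrm{top}}V^*$ --- connectedness of $\operatorname{SU}(2)$ forcing determinant one --- is a detail the paper leaves implicit in the phrase ``preserves the metric $g$, so it also preserves the associated volume form.''
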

\begin{proof}
Each cohomology class on $X$ can be uniquely represented by a harmonic form. The map $p \colon (T, \tilde{g}) \to (X, g)$ is a local isometry, thus harmonic forms on $X$ are the same as $G$-invariant harmonic  forms on $T$. These are the same as $G$-invariant forms preserved by the action of $T$ on itself by translations. Here we use the fact that harmonic forms for a K\"ahler metric on a torus are the same as left-invariant forms, see \cite{BL92}, Prop. 1.4.7. Such forms clearly form a $\operatorname{SU}(2)$-invariant subalgebra inside the  de~Rham algebra of $T$, hence this action descends to cohomology. 

By construction, this action preserves the metric $g$, so it also preserves the associated volume form and the Poincar\'e pairing.
\end{proof}

\begin{predl}\label{SU2 invariance}
Let $\alpha \in H^{2p}(X, \C)$.  Let $\pi \colon \mathcal{X} \to \CP^1$ be the 'twistor' family as defined above and let $X_q:= (X, J_q) = \pi^{-1}(q)$.

Then $\alpha \in H^{p,p}(X_q)$ for every $q \in \CP^1$ if and only if it is $\operatorname{SU}(2)$-invariant.
\end{predl}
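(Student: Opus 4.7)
The plan is to reduce the statement to a purely linear-algebraic assertion on the tangent space $V = T_0T$ to the origin of the covering torus $T$, and then apply the correspondence between Hodge types and $U(1)$-invariants that underlies the classical twistor argument.

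First, Proposition~\ref{SU2 to cohomology} canonically identifies $H^{\hdot}(X,\C)$ with $(\Lambda^{\hdot} V^*_\C)^G$, and under this identification $H^{p,p}(X_q)$ corresponds to $(\Lambda^{p,p}_{J_q} V^*_\C)^G$. The family of complex structures $J_q$ is $G$-equivariant by the construction of Proposition~\ref{global hyperhermitian}, and the $SU(2)$-action on $V$ commutes with $G$ because $G$ preserves the full hyper-Hermitian structure produced there. Thus both sides of the statement reduce to conditions on a bare form $\omega \in \Lambda^{2p}V^*_\C$, and it suffices to show: $\omega \in \Lambda^{p,p}_{J_q}V^*_\C$ for every $q \in \CP^1$ if and only if $\omega$ is $SU(2)$-invariant.

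For the linear-algebraic step I would use the following elementary observation: for any complex-structure operator $J$ on $V$, a form $\omega \in \Lambda^{2p}V^*_\C$ lies in $\Lambda^{p,p}_J$ if and only if it is fixed by the one-parameter subgroup $\{\exp(tJ)\}_{t \in \R} \subset GL(V)$. Indeed, on a form of pure Hodge type $(r,s)$ this subgroup acts by the character $e^{i(r-s)t}$, so invariance forces $r=s=p$. Applying this to each $J_q$, the condition ``$\omega \in \Lambda^{p,p}_{J_q}$ for every $q$'' becomes invariance of $\omega$ under the subgroup of $GL(V)$ generated by $\bigcup_{q \in \CP^1}\{\exp(tJ_q)\}_{t \in \R}$.

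The crux is then to identify this generated subgroup with the $SU(2)$-action of Subsection~\ref{SU2}. Writing $\exp(tJ_q) = \cos t \cdot \mathrm{Id} + \sin t \cdot J_q$ and using the decomposition $J_q = I_\mathcal{E}\oplus I^\mathcal{F}_q$, the restrictions of these one-parameter subgroups to the quaternionic summand $\mathcal{F}$ sweep out exactly the $SU(2)$ acting by the hyper-Hermitian structure, since every unit quaternion is uniquely of the form $\cos t + \sin t \cdot q$ with $q$ a purely imaginary unit quaternion and $t \in \R$. The main delicate point I expect is the bookkeeping on the $\mathcal{E}$-factor, on which all $J_q$ agree with $I_\mathcal{E}$: after decomposing $\omega$ along $V = \mathcal{E}\oplus \mathcal{F}$, I would argue that the $SU(2)$-invariance of the $\mathcal{F}$-part, combined with the total-degree constraint $2p$ and the $G$-equivariance, forces the $\mathcal{E}$-components to live in Hodge-symmetric types $(r,r)$, so that no separate invariance condition on $\mathcal{E}$ needs to be imposed and the two sides of the equivalence match.
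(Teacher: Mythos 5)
Your overall route is the same as the paper's: characterise $H^{p,p}(X_q)$-membership as invariance under the circle $\{\exp(tJ_q)\}$ via the weight $e^{i(r-s)t}$ on $(r,s)$-components, reduce everything to $G$-invariant constant forms on $V$, and then compare invariance under all of these circles with invariance under the compact group they generate. Up to that point the argument is sound and is exactly the mechanism the paper uses.

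The gap is in your last step; you correctly located the delicate point but your proposed fix does not work. The group generated by $\bigcup_q\{\exp(tJ_q)\}$ is strictly larger than the $\operatorname{SU}(2)$ of Subsection~\ref{SU2}, which acts trivially on $\mathcal{E}$: since $J_{-q}=I|_{\mathcal{E}}\oplus(-I_q)$ commutes with $J_q$, the product $\exp(tJ_q)\exp(tJ_{-q})$ equals $\exp(2tI)|_{\mathcal{E}}\oplus\operatorname{Id}_{\mathcal{F}}$, so the generated group is a copy of $\operatorname{U}(1)\times\operatorname{SU}(2)$ with a nontrivial circle acting on the $\mathcal{E}$-factor. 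Your claim that the total-degree constraint and $G$-equivariance force the $\mathcal{E}$-components into types $(r,r)$ is false: if $\dim_{\C}E\ge 2$ and $\Lambda^{2,0}E^*$ contains a nonzero $G$-invariant vector $e$ (for instance $X=A\times B$ a product of tori with $\eta$ pulled back from $B$, so that $E=T_0A$ with $\dim_{\C}A\ge 2$), then $\alpha=e$ is $G$-invariant, of total degree $2$, and $\operatorname{SU}(2)$-invariant because $\operatorname{SU}(2)$ does not see $\mathcal{E}$ at all, yet it has type $(2,0)$ for every $J_q$. So invariance under the generated group is genuinely stronger than $\operatorname{SU}(2)$-invariance, and the ``if'' direction cannot be obtained the way you propose; with the $\operatorname{SU}(2)$-action as defined it actually fails in such examples, a point on which the paper's own three-line proof (which invokes one-parameter subgroups of a compact group without distinguishing the two groups) is equally silent. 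What does survive, in your argument as in the paper's, is the ``only if'' direction --- invariance under every $\exp(tJ_q)$ implies invariance under any subgroup of the generated group, in particular under $\operatorname{SU}(2)$ --- and this is the only direction used in the proof of Theorem~\ref{deformations}. A correct two-sided statement would replace ``$\operatorname{SU}(2)$-invariant'' by ``invariant under the closure of the group generated by the $\exp(tJ_q)$''.
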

\begin{proof}
Each complex structure $J_q$ generates an $\operatorname{U}(1)$-action on the tangent bundle of $X$ which descends to the cohomology (and in fact defines the Hodge decomposition on $H^{\bullet}(X, \C)$). The condition $\alpha \in H^{p,p}(X_q)$ means that $\alpha$ is invariant under this $\operatorname{U}(1)$-action and by assumption this holds  for every $q \in \CP^1$. But a vector is invariant under a representation of a compact Lie group if and only if it is invariant under every its $1$-parameter subgroup.
\end{proof}

\subsection{Non-algebraic points in the twistor family.}\label{proof of the main theorem}

Let us now prove the following proposition:
\begin{predl}\label{No ample trianalytic}
Let $D \subset X$ be an immersed submanifold of (real) codimension $2$. Assume that $D$ is holomorphic for every complex structure $J_q$. Then the divisor $[D]$ is not ample in $X_q$ for any $q$.
\end{predl}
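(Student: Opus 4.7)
The plan is to show that any such $D$ must be an integral submanifold of the foliation $\mathcal{F}$, whose normal bundle is then isomorphic to the flat holomorphic line bundle $\mathcal{E}|_D$; ampleness would force a positive top self-intersection, but the flatness of $\mathcal{E}$ will make that number vanish.

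First I would apply Proposition \ref{linear trianalytic} pointwise. At each $x \in D$, the real tangent space $T_x D \subset T_x X = \mathcal{E}_x \oplus \mathcal{F}_x$ has real codimension $2$ and is a complex hyperplane for every $J_q$. By the construction in Proposition \ref{global hyperhermitian}, $J_q$ acts by the fixed operator $I|_\mathcal{E}$ on $\mathcal{E}_x$ and by $I_q$ on $\mathcal{F}_x$, where the family $\{I_q\}$ comes from the faithful $\mathbb{H}$-action supplied by Lemma \ref{local hyperhermitian}. Taking $U = \mathcal{E}_x$ and $W = \mathcal{F}_x$, Proposition \ref{linear trianalytic} forces $T_x D = \mathcal{F}_x$ and $\dim_\C \mathcal{E}_x = 1$. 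Consequently the splitting $TX|_D = \mathcal{E}|_D \oplus TD$ is holomorphic for every $J_q$, and identifies the normal bundle $N_{D/X_q}$ with $\mathcal{E}|_D$.

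Next I would verify that $\mathcal{E}$ is a flat line bundle, so that $c_1(\mathcal{E}) = 0$ in $H^2(X, \R)$. The pull-back $p^*\mathcal{E}$ to the torus cover is the translation-invariant subbundle of $TT$ corresponding to $E \subset V$, hence is preserved by the flat Levi-Civita connection of $\tilde{g}$. Descending to $X$ gives a flat unitary connection on $\mathcal{E}$, and this connection is automatically the Chern connection; the vanishing of $c_1(\mathcal{E})$ follows.

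Finally, suppose for contradiction that $[D]$ were ample in some $X_q$ and set $n = \dim_\C X$. By Nakai-Moishezon (equivalently, by asymptotic Riemann-Roch applied to $\mathcal{O}_{X_q}(D)$), the top self-intersection must be strictly positive. But using $[D]|_D = c_1(N_{D/X_q})$ and the projection formula,
\[
\int_X [D]^n \;=\; \int_D c_1(N_{D/X_q})^{n-1} \;=\; \int_D c_1(\mathcal{E}|_D)^{n-1} \;=\; 0,
\]
a contradiction. The step requiring the most care is matching the hypotheses of Proposition \ref{linear trianalytic} to our family $\{J_q\}$; beyond that, the argument is essentially formal, once one recognises that the flatness of the K\"ahler metric (and not merely the presence of a holomorphic $2$-form) is what forces the normal bundle of $D$ to have vanishing first Chern class.
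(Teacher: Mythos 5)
Your proof is correct, and its first half --- applying Proposition \ref{linear trianalytic} at a point of $D$ with $U=\mathcal{E}_x$, $W=\mathcal{F}_x$ to conclude $T_xD=\mathcal{F}_x$, so that $D$ is a leaf of the foliation $\mathcal{F}$ and $\dim_{\C}\mathcal{E}=1$ --- is exactly the paper's first step. Where you diverge is in how the contradiction with ampleness is extracted. The paper never touches the normal bundle: it takes the transverse form $\omega_E=g_E(J_q\cdot,\cdot)$ of the foliation, observes that it is a closed non-negative $(1,1)$-form on $X_q$ which is positive on $\mathcal{E}$ and has kernel $\mathcal{F}$, hence represents a nonzero nef class restricting to zero on $D$, and concludes from $\int_{X_q}\omega_E\cdot[D]^{n-1}=0$ that $[D]$ cannot be ample. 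You instead identify $N_{D/X_q}$ with $\mathcal{E}|_D$, note that $\mathcal{E}$ is parallel for the descended flat Levi-Civita connection so that $c_1(\mathcal{E})=0$ by Chern--Weil, and obtain $\int_X[D]^n=0$ via adjunction. Both contradictions are legitimate. The paper's version is marginally more robust: it only needs $\iota^*\omega_E=0$ on the smooth locus, so it survives if the ``immersed'' $D$ has non-embedded image, whereas your appeal to $\mathcal{O}_{X_q}(D)|_D\cong N_{D/X_q}$ requires $D$ to be an embedded smooth divisor; in the one place the proposition is applied (the proof of Theorem \ref{deformations}, where $D$ is a smooth member of a very ample system by Bertini) this costs you nothing. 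In exchange your route yields the slightly sharper numerical statement $[D]^n=0$ and makes explicit that it is the flatness of the line bundle $\mathcal{E}$, not merely the existence of the $2$-form, that kills the self-intersection.
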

\begin{proof}

Take any smooth point $x \in D$ and apply the Proposition~\ref{linear trianalytic} for $W:=\mathcal{F}|_x$ and $U:=\mathcal{E}|_x$  and $H:= T_xD \subset U\oplus W = T_xX$. We see that $T_xD = \mathcal{F}|_x$, hence it $D$  coincides with a leaf of the foliation, defined by $\mathcal{F}$. 

Assume that $[D] \in \operatorname{Pic}(X_q)$ is ample for some $q$. Let $g_{E}$ be the restriction of the metric $g$ on $\mathcal{E}$ and $\omega_E$ be its Hermitian form on $X_q$. This is a non-negative closed $(1,1)$-form, which is positive on $\mathcal{E}$ and its kernel coincides with $\mathcal{F}$. In particular, $\omega_E$ restricts on $D$ by $0$ and 
\[
\int_{X_q} \omega_E \cdot [D]^{n-1} = 0 
\]
But the class of $\omega_E$ is nef and $D$ cannot be ample.

\end{proof}

Now we are ready to prove the main theorem:

\begin{proof}[Proof of  the Theorem~\ref{deformations}]

Let $\mathcal{X}$ be a ``twistor space'' as above. By  the construction, this is a complex manifold which admits a holomorphic topologically  trivial fibration $\pi \colon \mathcal{X} \to \CP^1$. For every $q \in \CP^1$ the fibre $X_q = (X, J_q)$ admits a flat K\"ahler metric $g$. Moreover, $X_0  \simeq (X, I)$.

Now we want to prove, that for a very general $q \in \CP^1$ the fibre $X_q$ is not algebraic.

For every $\alpha \in H^2(X, \Q)$ define the set $$R_{\alpha} := \{q \in \CP^1 | \alpha \in H^{1,1}(X_q) \}.$$  The subsets $R_{\alpha} \subseteq \CP^1$ are algebraic subvarieties since they are the Noether-Lefschetz loci for the (non-polarizable) variation of Hodge structures $R^2 \pi_* \underline{\Z}_{\mathcal{X}}$.  This means, that for each $\alpha$ either $R_{\alpha} = \CP^1$, or $R_{\alpha}$ is finite. In the former case, as follows from Proposition~\ref{SU2 invariance}, $\alpha$ is $\operatorname{SU}(2)$-invariant.

We claim that if $R_{\alpha} = \CP^1$, then for every $q \in \CP^1$ the class $\alpha$ is not  a class of a very ample divisor on $X_q$.

Assume the opposite. Without loss of generality, we may assume that $\alpha$ is a class of a very ample divisor $D \subset X_0$. We may also assume that $D$ is smooth and irreducible (by Bertini theorem).

 Recall the Wirtinger-Federer theorem (see e.g. \cite{St}, Ch. I): If $Z$ is a smooth compact submanifold of dimension $2k$ inside a compact K\"ahler manifold $(X, g, \omega)$, then $$\int_Z \omega^{k} \le \operatorname{Vol}_g(Z),$$ and the equality holds if and only if $Z \subset X$ is complex analytic. 
 
 Let $\omega_q:= g(J_q\cdot, \cdot)$ be the K\"ahler form on $X_q$.  Let $n = \dim X_q$. For every $q \in \CP^1$ there exists an element $B(q) \in \operatorname{SU}(2)$, such that $[\omega_q ]= B(q) \cdot [\omega_0]$ in $H^2(X, \C)$. Hence
 \[
 \begin{gathered}
 \int_D \omega_q^{n-1} = \langle (\omega_q)^{n-1}, [D] \rangle = \langle B^{n-1}(q) \cdot \omega^{n-1}_0, \alpha \rangle = \\
 =\langle B^{n-1}(q) \cdot \omega^{n-1}_0, B^{n-1}(q)\cdot \alpha \rangle = \int_D \omega_0^{n-1} = \operatorname{Vol}_g(D)
 \end{gathered}
 \]
 
 We deduce that  $D$ is holomorphic with respect to every complex structure $J_q$ in our family.  But this leads to a contradiction by Proposition~\ref{No ample trianalytic}. 

Let $\operatorname{Amp}(X)$ be the set of $\alpha \in H^2(X, \Z)$ such that $\alpha$ is a class of a very ample divisor on $X_q$ for some $q \in \CP^1$. As we have shown, for each such $\alpha$ the set $R_{\alpha}$ is a proper algebraic subset of $\CP^1$, hence finite. The set
\[
\mathcal{R}:= \bigcup_{\alpha \in \operatorname{Amp}(X)} R_{\alpha}
\]
is a countable union of finite sets, hence at most countable.

Take any point $q \in \CP^1 \setminus \mathcal{R}$. The  complex manifold $X_q$ has no ample divisors and thus is non-algebraic. This finishes the proof.
\end{proof}

\begin{cor}\label{main corollary}
Let $X$ be a compact flat K\"ahler manifold. The following conditions are equivalent:
\begin{itemize}
\item There exists a smooth holomorphic family of complex manifolds over a marked complex analytic variety  $\pi \colon \mathcal{X} \to (B, 0)$ with $\pi^{-1}(0) \simeq X$, such that every neighbourhood of $0$  contains $t$ with $\pi^{-1}(t) = X_t$ non-algebraic;
\item $H^0(X, \Omega^2_X) \neq 0$.
\end{itemize}
\end{cor}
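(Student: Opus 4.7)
The plan is to show the two implications separately, treating them as essentially packaged forms of material already developed in the paper and in the introductory discussion.

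For the implication $H^0(X,\Omega^2_X)\neq 0 \Rightarrow$ existence of the family, I would simply invoke Theorem~\ref{deformations}. It supplies a smooth holomorphic family $\pi\colon \mathcal{X}\to\CP^1$ with $\pi^{-1}(0)\simeq X$ and with algebraic locus $\mathcal R\subset\CP^1$ at most countable. Taking $(B,0):=(\CP^1,0)$, any analytic (or even Euclidean) neighbourhood of $0$ in $\CP^1$ is uncountable, so it cannot be contained in $\mathcal R$ and must contain points $t$ for which $X_t$ is non-algebraic. This reduces the nontrivial direction entirely to the main theorem.

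For the converse $H^0(X,\Omega^2_X)=0 \Rightarrow$ no such family exists, I would argue by contradiction. Suppose $\pi\colon\mathcal{X}\to(B,0)$ is a smooth holomorphic family with $X_0\simeq X$. First, because the family is smooth and proper, Hodge numbers are constant on a neighbourhood of $0$: $h^{2,0}(X_t)=h^{2,0}(X)=0$. Second, since $X$ is Kähler, small deformations of $X$ are Kähler by the Kodaira--Spencer stability theorem for Kähler metrics; hence after shrinking $B$ we may assume every fibre $X_t$ is Kähler. The vanishing $h^{2,0}(X_t)=h^{0,2}(X_t)=0$ forces
\[
H^2(X_t,\R)=H^2(X_t,\R)\cap H^{1,1}(X_t).
\]
Therefore the Kähler cone of $X_t$, being an open non-empty subset of $H^2(X_t,\R)$, must contain rational classes. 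By Kodaira's embedding theorem each such $X_t$ is projective, contradicting the hypothesis that every neighbourhood of $0$ contains a non-algebraic fibre.

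The only real input beyond Theorem~\ref{deformations} itself is the openness of the Kähler property under smooth deformations together with the constancy of Hodge numbers; both are standard and do not represent a genuine obstacle. If any step requires care it is the first direction, which is wholly subsumed by Theorem~\ref{deformations}; the rest is bookkeeping that translates the $\CP^1$-twistor family into the marked germ $(B,0)$ formulation demanded by the corollary.
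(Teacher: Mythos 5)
Your proposal is correct and follows essentially the same route as the paper: the forward direction is reduced to Theorem~\ref{deformations} plus the observation that a neighbourhood in $\CP^1$ cannot lie in a countable set, and the converse uses openness of the K\"ahler condition, constancy of Hodge numbers, density of rational classes in $H^{1,1}\cap H^2(X_t,\R)$, openness of the K\"ahler cone, and Kodaira embedding. No substantive differences.
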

\begin{proof}
Assume that $H^0(X, \Omega_X^2) \neq 0$. One can take the  family $\pi \colon \mathcal{X} \to \CP^1$ which is constructed in Theorem \ref{deformations}. By the result of the Theorem \ref{deformations}, the set of points $t \in \CP^1$ for which $\pi^{-1}(t)$ is algebraic is at most countable, therefore every neighbourhood of $0$ contains a point from its complement.

Vice versa, assume that $H^0(X, \Omega^2_X) = 0$, but such family $\pi \colon \mathcal{X} \to (B, 0)$ exists.  Choose a neighbourhood $B' \subset B$ of $0$ such that all the fibres $\pi^{-1}(t), t \in B'$ are K\"ahler (this can be done because $\pi^{-1}(0)$ is K\"ahler and being K\"ahler is an open property). Now, the argument, which we already mentioned in the  Introduction, can be applied: take arbitrary $t  \in B'$. The K\"ahler manifold $X_t := \pi^{-1}(t)$ has the same Hodge numbers as $X$, in particular, $H^0(X_t, \Omega^2_{X_t}) = 0$. Therefore every real cohomology class  of degree $2$ is of Hodge type $(1,1)$ and $H^2(X_t, \Q)$ is dense inside $H^2(X_t, \R) = H^{1,1}(X_t) \cap H^2(X_t, \R)$.  But the set of classes of K\"ahler forms form an open cone $\mathcal{K} \subset H^{1,1}(X_t) \cap H^2(X_t, \R)$.  Therefore we can find a K\"ahler form with a rational cohomology class. By Kodaira embedding theorem this means that $X_t$ is algebraic.
\end{proof}

As a corollary we immediately obtain the following (global version of) theorem from \cite{CD+C}:

\begin{thrm}[Catanese, Demleitner]\label{Catanese-Demleitner}
Let $X$ be a flat K\"ahler manifold. Assume that $X$ is rigid (i.e. every holomorphic family of complex manifolds with special fibre isomorphic to $X$ is constant). Then $X$ is algebraic.
\end{thrm}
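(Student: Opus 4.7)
Proof plan:

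My strategy is to prove the contrapositive: if $X$ is not algebraic, then $X$ is not rigid. So I assume that $X$ is non-algebraic. By the argument recalled in the Introduction, this forces $H^0(X, \Omega^2_X) \neq 0$, for otherwise every real degree-two class is of type $(1,1)$, the K\"ahler cone contains a rational class, and Kodaira's theorem exhibits $X$ as projective.

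Next, I apply Theorem \ref{deformations} to obtain a flat smooth family $\pi\colon \mathcal{X} \to \CP^1$ with $\pi^{-1}(0) \simeq X$. Using the smooth trivialization of $\mathcal{X}$ over $\CP^1$ I identify each $H^2(X_q, \C)$ with the fixed vector space $H^2(X, \C)$, and I define the holomorphic period map
\[
\mathcal{P}\colon \CP^1 \to \operatorname{Gr}(h^{2,0}, H^2(X, \C)), \qquad q \mapsto H^{2,0}(X_q).
\]
It will suffice to show that $\mathcal{P}$ is non-constant, as this produces two non-biholomorphic fibres and contradicts rigidity. Indeed, if every $X_q$ were biholomorphic to $X$ via some $\phi_q$, then $\phi_q^* \in \operatorname{Aut}(H^2(X, \Z))$ would map $H^{2,0}(X_q)$ onto $H^{2,0}(X_0)$, placing the image of $\mathcal{P}$ inside the $\operatorname{Aut}(H^2(X, \Z))$-orbit of $\mathcal{P}(0)$, which is countable; a holomorphic map from the irreducible projective curve $\CP^1$ to a Grassmannian with countable image is constant (the image is both a connected analytic subset and a countable set, hence a point).

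To produce the non-constancy I fix a non-zero $\eta \in H^0(X, \Omega^2_X)$; by the construction of Section \ref{construction}, $\mathcal{E} := \Ker\eta$ admits a $g$-orthogonal complement $\mathcal{F}$ on which $\eta$ is non-degenerate. Then I consider the point $\infty \in \CP^1$ antipodal to $0$ in the twistor sphere, where the complex structure is $J_\infty = I|_\mathcal{E} \oplus (-I|_\mathcal{F})$. Replacing $I|_\mathcal{F}$ by $-I|_\mathcal{F}$ interchanges $(p,q)$- and $(q,p)$-parts of the exterior algebra on $\mathcal{F}^*\otimes\C$, so $[\eta] \in H^{2,0}(X_0)$ becomes a class of type $(0,2)$ with respect to $J_\infty$; in particular $[\eta] \notin H^{2,0}(X_\infty)$, so $\mathcal{P}(0) \neq \mathcal{P}(\infty)$.

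The main obstacle I anticipate is tracking the Hodge type of $[\eta]$ under the twistor construction carefully enough to conclude that $H^{2,0}(X_0)$ and $H^{2,0}(X_\infty)$ are genuinely distinct as subspaces of $H^2(X, \C)$, rather than two different labellings of the same subspace. This requires care with the smooth trivialization and the explicit $\operatorname{SU}(2)$-action of Section \ref{SU2}; once the non-constancy of $\mathcal{P}$ is in hand, the contradiction with rigidity is very short.
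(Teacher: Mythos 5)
Your proposal is correct, and its skeleton is the same as the paper's: pass to the contrapositive, note that a non-algebraic $X$ must have $h^{2,0}(X)\neq 0$ by the Kodaira-embedding argument from the Introduction, invoke Theorem~\ref{deformations} to produce the twistor family $\pi\colon\mathcal{X}\to\CP^1$, and show that this family is non-constant. Where you genuinely diverge is in that last step. The paper disposes of non-constancy in one sentence --- ``$J_0$ and $J_1$ anticommute and therefore are not conjugated by a diffeomorphism of $X$'' --- which is both loose (the operators $J_q$ all restrict to $I$ on $\mathcal{E}$, so they do not literally anticommute unless $\mathcal{E}=0$) and logically incomplete: two anticommuting complex structure operators on a real vector space are always conjugate (by an automorphism of $\mathbb{H}$ carrying $i$ to $j$), so anticommutation by itself does not rule out conjugation by a diffeomorphism. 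Your period-map argument supplies exactly the missing content. The point you flag as the ``main obstacle'' is in fact unproblematic: $p^*\eta$ is translation-invariant on $T$, so $\eta$ is parallel for the flat Levi--Civita connection of $g$, hence harmonic for every $(g,J_q)$; its class is therefore of pure type read off pointwise, namely $(2,0)$ for $J_0$ and $(0,2)$ for $J_\infty=I|_{\mathcal{E}}\oplus(-I|_{\mathcal{F}})$, and since $[\eta]\neq 0$ (a nonzero holomorphic form on a compact K\"ahler manifold is never exact) and $H^{2,0}\cap H^{0,2}=0$, one gets $\mathcal{P}(0)\neq\mathcal{P}(\infty)$ inside the fixed space $H^2(X,\C)$. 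Combined with the holomorphicity of $q\mapsto F^2H^2(X_q)$ (Griffiths, for the holomorphic family $\pi$) and the countability of the $\operatorname{Aut}(H^2(X,\Z))$-orbit, this shows more than is strictly needed: not only is the family non-trivial, but not all fibres are even abstractly biholomorphic. In short, same route as the paper, but your justification of the decisive non-constancy step is more careful and in effect repairs a gap in the paper's one-line argument.
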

\begin{proof}
Assume that $X$ is non-algebraic. Then, of course, $h^{2,0}(X) \neq 0$. Consider the family $\mathcal{X} \to \CP^1$ from the Theorem \ref{deformations}. It is non-constant because $J_0$ and $J_1$ anticommute and therefore are not conjugated by a diffeomorphism of $X$. Hence, $X$  is not rigid.
\end{proof}

Observe that in the  paper \cite{CD+C} only infinitesimal deformations were considered, so our result is slightly stronger.

\section{Examples of non-algebraic flat K\"ahler manifolds with $b_1=0$}\label{quaternionic doubles}

As we mentioned in the Introduction, many examples of non-algebraic flat K\"ahler manifolds are given by complex tori of (complex) dimension greater than one. Somehow opposite to complex tori are the flat K\"ahler manifolds with vanishing first Betti number: these are the manifolds of the form $T/G$, where $T$ is a complex torus and the finite group $G$ acts on $T$ sufficiently non-trivially from the topological point of view (i.e. $H_1(T, \Q)^G = 0$). As it was pointed out to us by the reviewer, one can construct non-algebraic flat K\"ahler  manifolds with $b_1=0$ by mimicing the construction of $3$-dimensional flat K\"ahler manifolds with monodromy group $D_4$, as it is presented in \cite{CD18}.

We will explain this construction and then present  yet another one, which allows  finding many examples of holomorphically symplectic flat K\"ahler manifolds with $b_1=0$ and which we consider to be of independent interest. 

First of all, let $E$ be an elliptic curve and $S$ a non-algebraic complex torus. Let $\tau_1, \tau_2$ be two different non-trivial $2$-torsion elements of $E$ and $\sigma \in S$ be an element of order $4$. Let $T':=E \times E \times S$.
\begin{predl}
There exists a torus $T$ isogenous to $T'$ and a free holomorphic action of the dihedral group $G \simeq D_4$ on $T$ such that $b_1(T/G) = 0$. The flat K\"ahler manifold $X = T/G$ is non-algebraic.
\end{predl}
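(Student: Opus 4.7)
I would follow the template of \cite{CD18} for three-dimensional flat K\"ahler manifolds with rotation group $D_4$ and transplant it to $T' = E \times E \times S$ itself; the construction works already with $T = T'$ (the identity isogeny). Denote by $V = \C \oplus \C \oplus V_S$ the universal cover of $T'$ and by $\Lambda = \Lambda_E \oplus \Lambda_E \oplus \Lambda_S$ its lattice. Pick an element $\alpha \in E$ of exact order $4$ and a non-trivial $2$-torsion element $\eta \in S$, and define affine holomorphic transformations
\[
r(x_1, x_2, y) = (-x_1,\; x_2 + \alpha,\; -y + \sigma), \qquad s(x_1, x_2, y) = (x_1 + \tau_1,\; -x_2 + \tau_2,\; -y + \sigma + \eta).
\]

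A direct calculation (using that $\tau_1, \tau_2, \eta$ are $2$-torsion and $\alpha, \sigma$ are $4$-torsion) shows that $r^4 = s^2 = \operatorname{Id}$ and $srs = r^{-1}$ hold exactly in $\operatorname{Iso}_{\C}(V)$, so $r$ and $s$ generate a subgroup of $\operatorname{Iso}_{\C}(V)$ isomorphic to $D_4$. Next one verifies that each of the seven non-identity elements acts on $T' = V/\Lambda$ without fixed points. The rotations $r, r^2, r^3$ translate the $x_2$-coordinate by $\alpha, 2\alpha, 3\alpha$ respectively, each non-zero in $E$ since $\alpha$ has exact order $4$; the involutions $s$ and $r^2 s$ translate the $x_1$-coordinate by $\tau_1 \neq 0$; and the reflections $rs$, $r^3 s$ have linear part $+\operatorname{Id}$ on $V_S$ (both $r$ and $s$ act by $-\operatorname{Id}$ on $V_S$, so their products act by $+\operatorname{Id}$) and translate $y$ by $-\eta \neq 0$. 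Thus the $D_4$-action on $T'$ is free, and $X := T'/D_4$ is a flat K\"ahler manifold by Lemma~\ref{complex geometric Bieberbach}.

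The vanishing of $b_1(X)$ is a direct representation-theoretic check: $r$ acts as $-\operatorname{Id}$ on $H^1(E, \Q)_{x_1} \oplus H^1(S, \Q)$, while $s$ acts as $-\operatorname{Id}$ on $H^1(E, \Q)_{x_2} \oplus H^1(S, \Q)$, so no non-zero class in $H^1(T', \Q)$ is $D_4$-invariant. As for non-algebraicity: since $S$ is non-algebraic, any rational K\"ahler class on $T'$ would restrict to a rational K\"ahler class on $S$ and polarize it; hence $T'$ is non-algebraic, and since a finite \'etale cover of a projective variety is projective, $X = T'/D_4$ cannot be algebraic either.

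The main obstacle is ensuring that $rs$ acts freely. If one had instead used the naive faithful two-dimensional representation of $D_4$ on $E \times E$ (with $r$ a $90^\circ$ rotation), the relation $srs = r^{-1}$ would force the translation of $rs$ to lie entirely in the $(-1)$-eigenspace of its linear part, leaving a curve of fixed points on the diagonal of $E \times E$; since $rs$ would then still act as an involution on $V_S$, no choice of translation on $V_S$ could break those fixed points. The resolution built into the construction above is to let the linear action on $E \times E$ factor through $D_4/\langle r^2 \rangle \simeq (\Z/2)^2$, so that $r$ acts on $E \times E$ by the involution $(-x_1, x_2)$ and its genuine order-$4$ nature is absorbed by the translation by $\alpha$ on the second factor. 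This flips the linear part of $rs$ on $V_S$ from $-1$ to $+1$, and the small translation $-\eta$ then rules out fixed points.
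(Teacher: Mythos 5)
Your construction is correct and does prove the statement as written, but it is genuinely different from the paper's. The paper keeps the faithful two-dimensional representation of $D_4$ on $E\times E$: its $r$ is the honest order-four rotation $(x_1,x_2)\mapsto (x_2,-x_1)$ and $s$ is the swap, decorated with translations by $\tau_1,\tau_2,\sigma$. The price is that the relations of $D_4$ hold only up to the central translation by $(\tau_1+\tau_2,\tau_1+\tau_2,0)$ (in the paper's normalisation $s^2$ is that translation), which is exactly why the statement allows an isogeny: one must pass to $T=T'/\langle \tau_1+\tau_2\rangle$ before the group becomes $D_4$. You instead make the relations hold exactly on $T'$ by letting the linear action on $E\times E$ factor through $(\Z/2)^2$ and hiding the order of $r$ in the translation by a point $\alpha$ of exact order $4$; your verification of the relations, of freeness of all seven non-trivial elements, of $H^1(T',\Q)^G=0$, and of non-algebraicity via the finite \'etale cover $T'\to X$ containing $S$ are all sound. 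What you lose is the feature that motivates the paper's choice (``mimicking the $D_4$-monodromy threefolds of \cite{CD18}''): in your example $r^2$ is a pure translation, so the rotation group (holonomy of the Levi-Civita connection) of $X$ is only $(\Z/2)^2$, not $D_4$; the quotient can be rewritten as a $(\Z/2)^2$-quotient of the larger torus $T'/\langle(0,2\alpha,0)\rangle$. The literal statement does not require faithful holonomy, so this is a stylistic rather than a logical defect, but it is worth being aware that the two constructions produce non-isomorphic holonomy representations.

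One concrete error in your closing discussion: the claim that with the faithful two-dimensional representation the relation $srs=r^{-1}$ forces the translation part of $rs$ into the $(-1)$-eigenspace of its linear part, producing a curve of fixed points, is false. In the paper's version $rs$ acts on $E\times E$ by $(x_1,x_2)\mapsto (x_1+\tau_2,\,-x_2-\tau_1)$: its translation has the non-zero component $\tau_2$ along the $(+1)$-eigenspace, so it is fixed-point free, and the genuine obstruction in that approach is not freeness of $rs$ but the failure of $s^2=\mathrm{Id}$ on the nose, cured by the isogeny. Since this paragraph is only motivation, it does not affect the validity of your proof, but the stated reason for rejecting the faithful representation is not the right one.
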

\begin{proof}

 Let $G'$ be the subgroup of holomorphic automorphisms of $T'$ generated by
\[
s \colon (x_1, x_2, y) \mapsto (x_2+\tau_1, x_1 + \tau_2, -y)
\]
and
\[
r \colon (x_1, x_2, y) \mapsto (x_2, -x_1, y+\sigma).
\]
Here $x_i$ are coordinates on the two copies of $E$ and $y$ is the coordinate on $S$. One sees that $s^2 = r^4 = \mathrm{Id}$, while 
\[
(sr)^2(x_1, x_2, y) = (x_1 + \tau_1 + \tau_2, x_2+\tau_1+\tau_2, y).
\]
Therefore the action of this group descends to $T = T'/\langle \tau_1 + \tau_2 \rangle$ and the resulting group $G$ acting on $T$ is isomorphic to  the dihedral group $D_4$. The quotient $X = T/G$ is a flat K\"ahler manifold. If we decompose $H_1(T, \Q)$ as $H_1(E, \Q) \oplus H_1(E, \Q) \oplus H_1(S, \Q)$, we see that $s$ acts on this space through $$H_1(s) = \begin{pmatrix} 0 & 1& 0 \\1& 0 & 0 \\ 0& 0& -\operatorname{Id} \end{pmatrix},$$ while $r$ acts through $$H_1(r) = \begin{pmatrix}0 & -1&0\\1&0&0\\ 0&0&\operatorname{Id} \end{pmatrix}.$$ One easily sees that $H_1(T, \Q)^G = 0$.  

 If $X$ were algebraic, then $T'$ would be, as it is a finite cover of $X$. Although, $T'$ contains a non-algebraic subtorus $S$.
\end{proof}

Now let us present  more examples of non-algebraic flat K\"ahler manifolds with vanishing $b_1$.

 We start  with the following definition:
\begin{df}\label{definition of quaternionic double}
Let $(X, I_X)$ be a smooth complex  manifold. Assume that $X$ admits a flat connection $\nabla$ on $TX$, such that $\nabla I_X = 0$ and the monodromy action of $\nabla$ preserves a lattice $\Lambda_x \subset T_xX$. Consider the union of all possible parallel transports of vectors in $\Lambda_x$ as a subset $\Lambda \subset TX$ inside the total space of the tangent bundle. The intersection of $\Lambda$ with each fibre of the tangent bundle is a lattice inside a real vector space, thus we can take a fibrewise quotient $TX/\Lambda =: X_+$. The resulting manifold is called {\it the quaternionic double} of $X$.
\end{df}

This definition appears, for example,  in the work of   Soldatenkov and Verbitsky  (\cite{SV}, Section 3.2). The name is motivated by the following property: the flat connection $\nabla$ splits the tangent bundle $TX_+$ into a direct sum of two copies of $TX$. One can consider the  three complex structures on $X_+$ written with respect to this splitting as 
\[
I := \begin{pmatrix} I_X & 0 \\ 0 & -I_X \end{pmatrix}; \ J:= \begin{pmatrix} 0 & -1 \\ 1& 0\end{pmatrix}; \ K:= \begin{pmatrix} 0& -I_X \\ -I_X & 0 \end{pmatrix}.
\]
In the same paper(\cite{SV}) Soldatenkov and Verbitsky proved (Section 3.2.) that these complex structures are integrable and satisfy quaternionic relations
\[
I^2 = J^2 = K^2 = IJK =-1.
\] Moreover, as they showed, this establishes a Hyperk\"ahler structure on $X_+$ if and only if the initial flat connection $\nabla$ on $X$ was orthogonal with respect to some Hermitian metric (equivalently, $X$ is flat K\"ahler and $\nabla$ is its Levi-Civita connection).

\begin{cor}\label{properties of quaternionic doubles}
Let $X$ be a flat K\"ahler manifold. Then there exists a flat K\"ahler manifold $X_+$ endowed with a holomorphic projection $X_+ \to X$ such that
\begin{itemize}
\item $X_+$ admits a holomorphic symplectic form and the fibres of the projection are Lagrangian tori;
\item $b_1(X_+) = 2b_1(X)$.
\end{itemize}
In particular, if $X$ is a flat K\"ahler manifold with $b_1=0$, then $X_+$ is a flat K\"ahler manifold with $b_1=0$, which admits a non-algebraic deformation.
\end{cor}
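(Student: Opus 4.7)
My plan is to realise $X_+$ concretely as a free quotient of a complex torus by a finite group, and then read off all the listed properties from the Soldatenkov--Verbitsky hyperk\"ahler structure. Writing $X = T/G$ with $T = \C^n/\Lambda_T$ a complex torus and $G$ finite acting freely, the tangent bundle $TX$ lifts to the trivial bundle $T \times \C^n$ on $T$ and the fibrewise lattice of Definition~\ref{definition of quaternionic double} lifts to the constant lattice $\Lambda_T$. One therefore obtains
\[
X_+ \simeq \bigl((T, I_T) \times (T, -I_T)\bigr)/G,
\]
where $G$ acts on the first factor through its original action on $T$ and on the second factor through the differential of that action, i.e.\ through the monodromy representation on $T_0 T$; by the remark after Lemma~\ref{complex geometric Bieberbach} this monodromy representation coincides with the action of $G$ on $H_1(T, \R)$ coming from the action on $T$. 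Since $G$ acts freely on the first factor, it acts freely on the product, and Lemma~\ref{complex geometric Bieberbach} identifies $X_+$ as a compact flat K\"ahler manifold. The projection $X_+ \to X$ is induced by the first-factor projection; it is holomorphic because the complex structure $I$ on $X_+$ preserves the horizontal/vertical splitting $TX_+ = TX \oplus TX$ and restricts to $I_X$ on the horizontal summand.

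Next, by the Soldatenkov--Verbitsky theorem \cite{SV} the data $(g, I, J)$ makes $X_+$ hyperk\"ahler, so $\sigma_I := \omega_J + \sqrt{-1}\,\omega_K$ is a holomorphic symplectic form on $(X_+, I)$. The fibres of $X_+ \to X$ are complex tori of complex dimension $\dim_{\C} X$, identified pointwise with the vertical summand of the splitting. From the matrix presentations of $J$ and $K$ both operators interchange the horizontal and vertical summands, while the hyperhermitian metric $g$ is an orthogonal direct sum with respect to this decomposition. Consequently $\omega_J(v_1, v_2) = g(J v_1, v_2) = 0$ and $\omega_K(v_1, v_2) = 0$ for any two vertical vectors $v_1, v_2$, so $\sigma_I$ restricts trivially to each fibre; since the fibre has half the complex dimension of $X_+$, this identifies it as a Lagrangian torus.

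Finally, for the first Betti number I would use the presentation above and take $G$-invariants in rational cohomology:
\[
H^1(X_+, \Q) = \bigl(H^1(T, \Q) \oplus H^1(T, \Q)\bigr)^G = H^1(X, \Q)^{\oplus 2},
\]
where the identification of the two $G$-representations on $H^1(T, \Q)$ is, once again, the content of the remark following Lemma~\ref{complex geometric Bieberbach}. This identification of the two $G$-actions is the only point that needs any real care in the argument, but it is supplied by that remark. The "in particular" clause is then immediate: if $b_1(X) = 0$ then $b_1(X_+) = 0$, while the existence of $\sigma_I$ forces $H^0(X_+, \Omega^2_{X_+}) \neq 0$, so Theorem~\ref{deformations} supplies the claimed non-algebraic deformation.
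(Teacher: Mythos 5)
Your proposal is correct and follows essentially the same route as the paper: both identify $X_+$ with the quotient of $T \times \bigl(H_1(T,\R)/H_1(T,\Z)\bigr)$ by the diagonal $G$-action (your second factor $(T,-I_T)$ with the monodromy action is exactly this), compute $b_1(X_+) = \dim\bigl(H_1(T,\R)\oplus H_1(T,\R)\bigr)^G = 2b_1(X)$, and invoke Theorem~\ref{deformations} for the final clause. Your explicit verification that $\sigma_I$ vanishes on the vertical summand (hence that the fibres are Lagrangian) is a detail the paper leaves to the Soldatenkov--Verbitsky reference, but it is a welcome addition rather than a divergence.
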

\begin{proof}
Let $T$ be a torus, which covers $X$, so that $X = T/G$. The group $G$ acts on $H_1(T, \R)$ preserving the lattice $H_1(T, \Z)$, and this induces  a $G$-action by automorphisms on the torus $$\operatorname{Alb}(T) = H_1(T, \R)/H_1(T, \Z).$$ Of course, $H_1(T, \R)/H_1(T, \Z)$ is isomorphic to $T$, however this new action of $G$ is different from the initial one since it preserves the origin. Now the identification of $H_1(T, \R)$ with a tangent space to $T$ in the origin gives rise to an identification of $H_1(T,\R)$ with the tangent space to a point on $X$ as a $G$-module. One sees from the construction that the quaternionic double $X_+$ can be also described as $$X_+ =\frac{ T \times \left(H_1(T, \R)/H_1(T, \Z) \right)}{G}.$$ (where the action of $G$ on $T \times \left(H_1(T, \R)/H_1(T, \Z) \right )$ is diagonal). Therefore, $$b_1(X_+) = \dim (H_1(T, \R) \oplus H_1(T, \R))^G = 2b_1(X).$$
The existence of non-algebraic deformations follows from Theorem \ref{deformations}.
\end{proof}

\rmk Instead of taking the tangent bundle in the definition of quaternionic double one might take the {\bf cotangent} bundle and obtain another flat K\"ahler manifold $X^+$ (the {\it co-quaternionic double}). It also naturally carries a holomorphic symplectic form, which descends from the total space of the cotangent bundle, and also has $b_1(X^+) = 2b_1(X)$. Moreover, manifolds $X^+$ and $X_+$ admit holomorphic Lagrangian fibrations over the same base $X$ with fibres being dual complex tori.

The quaternionic and co-quaternionic doubles  might serve as a ''toy version'' for  several important examples where a similar configuration occurs, for example, a pair of Hitchin fibrations on the moduli space of $G$-Higgs bundles and the moduli space of the Langlands - dual $^LG$-Higgs bundles (see e.g. \cite{Hit}).


\begin{thebibliography}{99}
\bibitem[ARVdV]{ARVdV}E.  Amerik, M. Rovinsky, A. Van de Ven. A boundedness theorem for morphisms between threefolds. Ann. Inst. Fourier (Grenoble) 49 (1999), no. 2, 405--415
\bibitem[BCF]{BCF} I. Bauer ,  F. Catanese, D. Frapporti. Generalized Burniat type surfaces and Bagnera-de Franchis varieties.  Kodaira Centennial issue of the Journal of Mathematical Sciences, the University of Tokyo, J. Math. Sci., Tokyo 22 (2015), No. 1, 55–111.
\bibitem[BdF]{BdF} G. Bagnera, M. de Franchis. Le superficie algebriche le quali ammettono una rappresentazione parametrica mediante funzioni iperellittiche di due argomenti. Mem. di Mat. e di Fis. Soc. It. Sc. (3) 15 (1908) : 253-343.
\bibitem[BHPV]{BHPV} W. P. Barth, K. Hulek, C.A.M. Peters, A. Van de Ven. Compact Complex Surfaces. Ergebnisse der Mathematik und ihrer Grenzgebiete. 3. Folge., 4, Springer-Verlag, Berlin (2004)
\bibitem[BL92]{BL92}C. Birkenhake, H. Lange.  Complex Abelian varieties. Berlin, New York: Springer-Verlag. (1992)
\bibitem[BL99]{BL99} C. Birkenhake, H. Lange. Complex Tori.  Progress in Mathematics 177 , Birkhäuser Base l (1999)
\bibitem[Bar]{Bar} D. Barlet.Two Semi-Continuity Results for the Algebraic Dimension of Compact Complex Manifolds. J. Math. Sci. Univ. Tokyo 22 (2015), 39–54.
\bibitem[Beau]{Beau}  A. Beauville, Vari\'etes K\"ahleriennes dont la premi\'ere classe de Chern est nulle, J. Differential
geometry 18 (1983), pp. 755-782.
\bibitem[Bieb]{Bieb} L. Bieberbach. \"Uber die Bewegungsgruppen der Euklidischen R\"aume. Mathematische Annalen, 70 (3) (1911): 297–336
\bibitem[Cat15]{Cat15} F. Catanese. Topological methods in moduli theory. Bull. Math. Sci. 5.(2015): 287–449.
\bibitem[CC]{CC} F. Catanese, P. Corvaja.  Teichm\"uller spaces of Generalized Hyperelliptic Manifolds. Proceedings of the Conference `Complex and Symplectic Geometry' (Cortona, June 2016), Springer INdAM Ser. 21, (2017):  39–49.
\bibitem[CD18]{CD18} F. Catanese, A. Demleitner. The Classification of Hyperelliptic Threefolds. {\it preprint} 	arXiv:1812.09754 [math.AG]
\bibitem[CD+C]{CD+C} F. Catanese, A. Demleitner (with appendix written with B. Claudon). Rigid Group Actions on Complex Tori are Projective (after Ekedahl). {\it preprint} arXiv:1711.05545v2 [math.AG] .
\bibitem[DHS]{DHS} K.Dekimpe, M Ha\l enda, A. Szczepanski. K\"ahler flat manifolds J. Mathem. Soc. of Japan, 61 (2) (2009): 363-377
\bibitem[ES]{ES} F. Enriques, F. Severi. M\'emoire sur les surfaces hyperelliptiques. Acta Math. 32, (1909): 283-392 and 33 (1910): 321-403.
\bibitem[Fed]{Fed}  E.S.Fedorov. Simmetriya pravil'nykh sistem figur ( The symmetry of regular systems of figures). Zapiski Imperatorskova Sankt Petersburgskova Mineralogicheskova Obshchestva (Proceedings of the Imperial St. Petersburg Mineralogical Society) (1891) series 2, 28 : 1–146. (in Russian). English translation: David and Katherine Harker (trans.), Symmetry of Crystals, American Crystallographic Association Monograph No. 7 [Buffalo, N.Y.: American Crystallographic Association, 1971], pages 50–131.
\bibitem[Fuj]{Fuj} A.  Fujiki. On primitively symplectic compact K\"ahler V-manifolds of dimension four. in Classification of algebraic and analytic manifolds, (Katata, 1982), 71–250, Progr. Math., 39, Birkh\"auser Boston, Boston, Mass., 1983.
\bibitem[H] {H} M. Ha\l enda. Complex Hantzsche-Wendt manifolds. Geometriae Dedicata, April 2017, Volume 187, Issue 1, pp 1–16
\bibitem[Hat]{Hat} A. Hatcher. Algebraic topology (Cambridge University Press, 2002), 556 pp.,
\bibitem[Hit]{Hit} N. Hitchin. Semiflat hyperkaehler manifolds and their submanifolds, a talk at “Workshop: Geometrical Aspects of Supersymmetry” (Simons Center for Geometry and Physics), 2018-10-25
\bibitem[HS]{HS} G. Hiss , A. Szczepański. On torsion free crystallographic groups. J. Pure Appl. Algebra, 74(1):39–56, 1991.
\bibitem[K]{K} D. Kaledin. Integrability of the twistor space for a hypercomplex manifold, Selecta Math. New Series, 4 (1998), 271–278
\bibitem[L]{L} R.Lutowski. Flat manifolds with homogeneous holonomy representation.  {\it preprint,} arXiv:1803.07177 [math.GR]
\bibitem[Sh]{Sh}  I.R. Shafarevich. Basic algebraic geometry: vol.2.  Schemes and complex manifolds. Springer-Verlag Berlin Heidelberg, 2013.
\bibitem[St]{St} G. Stolzenberg. Volumes, Limits and Extensions of Analytic Varieties.  Lect. Notes in Math. 19, 1966, Springer-Verlag.
\bibitem[SV]{SV}A. Soldatenkov, M. Verbitsky. Holomorphic Lagrangian fibrations on hypercomplex manifolds. International Mathematics Research Notices 2015 (4), 981-994
\bibitem[Ver]{Ver}M. Verbitsky. Algebraic structures on a hyperk\"ahler manifold. Math. Res. Lett. 3 763-767, 1996
\bibitem[Vois02]{Vois02} C. Voisin. Hodge Theory and complex algebraic geometry. 2 vols., Cambridge University Press (Cambridge Studies in Advanced Mathematics), 2002, 2003. vol. 1
\bibitem[Vois04]{Vois04} C. Voisin. On the homotopy types of compact Kähler and complex projective manifolds. Invent. math., August 2004, Volume 157, Issue 2, pp 329–343
\bibitem[W]{W} J. Wolf. Spaces of constant curvature. McGraw Hill, New York, 1967
\end{thebibliography}
\end{document}